\numberwithin{equation}{section}
\numberwithin{proposition}{section}
\numberwithin{theorem}{section}
\numberwithin{definition}{section}
\numberwithin{example}{section}
\numberwithin{lemma}{section}
\numberwithin{corollary}{section}
\newcommand{\ri}{\rightarrow}
\newcommand{\RR}{{\mathbb R}}
\journalname{}
\begin{document}

\title{Coupled systems of nonlinear variational inequalities and applications \thanks{This research was partially supported by CNCS-UEFISCDI Grant No. PN-III-P1-1.1-TE-2019-0456.}
}

\titlerunning{Coupled systems of nonlinear variational inequalities and applications}        

\author{\normalsize{\rm Nicu\c sor} {\sc Costea}$^\ast$}

\authorrunning{N. Costea} 

\institute{
              {$^\ast${\scriptsize  Department of Mathematics and Computer Science,   {\sc Politehnica} University of Bucharest, 313 Splaiul Independen\c tei, 060042 Bucharest, Romania \\ 
 }
             \email{{\tt nicusorcostea@yahoo.com;  nicusor.costea2606@upb.ro}}  }           
}             

\maketitle

{\small
\begin{abstract}   In this paper we investigate the existence of solutions for  a system consisting of two inequalities of variational type. Each inequality is formulated in terms of a nonlinear bifunction $\chi$ and $\psi$, respectively  and a coupling functional $B$. We consider two sets of assumptions ${\bf (H_\chi^i)}$, ${\bf (H_\psi^j)}$ and ${\bf (H_B^k)}$, $i,j,k\in\{1,2\}$ and we show that, if the constraints sets are bounded, then a solution exists regardless if we assumed the first or the second hypothesis on $\chi$, $\psi$ or $B$, thus obtaining eight possibilities. When the constraint sets are unbounded a coercivity condition is needed to ensure the existence of solutions. We provide two such conditions. We consider nonlinear coupling functionals, whereas, in all the papers that we are aware of that dealing with such type of inequality systems the coupling functional is assumed bilinear and satisfies a certain "inf-sup" condition. An application, arising from Contact Mechanics,  in the form of a partial differential inclusion driven by the $\Phi$-Laplace operator is presented in the last section.
\\
\keywords{Variational inequalities \and Nonlinear coupling functional \and Bounded and unbounded constraint sets \and Weak solution \and Convex  subdifferential \and Partial differential inclusions}
\subclass{35J88 \and 58E35 \and 58E50}

\end{abstract}
}

\section{Introduction}

Let $X, {Y}$ be two real Banach spaces and $K\subseteq X$, $\Lambda\subseteq  {Y}$ be nonempty, closed and convex  subsets. Assume $B:X\times {Y}\rightarrow \RR$, $\chi:X\times X\rightarrow \RR$, $\psi: {Y}\times {Y}\rightarrow \RR$, $f\in X^\ast$ and $g\in {Y}^\ast$ are given and consider the following {\it coupled system of nonlinear variational inequalities}:

Find $(u,\lambda)\in K\times\Lambda$ such that 
$$
(S):\ \; \left\{ 
\begin{array}{ll}
B(v, \lambda)-B(u, \lambda)+\chi(u,v-u)\geq \langle f,v-u\rangle, & \ \forall v\in K,\\
B(u, \lambda)-B(u,\mu)+\psi( \lambda,\mu- \lambda)\geq \langle g,\mu- \lambda \rangle ,& \ \forall \mu\in \Lambda.
\end{array}
\right.
$$

Let us consider the following {\it saddle point problem}:
 
Find $(u, \lambda)\in X\times\Lambda$ such that 
$$
(SP):\ \; \left\{ 
\begin{array}{ll}
a(u,v)+b(v,\lambda) = (f,v), & \ \forall v\in X,\\
 b(u,\mu-\lambda)\leq 0,& \ \forall \mu\in \Lambda,
\end{array}
\right.
$$
where $X$ and $Y$ are Hilbert spaces, $\Lambda\subset Y$,  $f\in X$, $a:X\times X\rightarrow \mathbb{R}$ is a bilinear continuous coercive form, $b:X\times Y \rightarrow \mathbb{R}$ is a bilinear continuous form satisfying the following {\it "inf-sup property"}
\begin{equation}\label{inf-sup Prop}
\exists \alpha>0: \ \; \inf\limits_{\stackrel{\mu\in Y}{\mu\neq 0_Y}} \sup\limits_{\stackrel{v\in X}{v\neq 0_X}}\frac{b(v,\mu)}{\|v\|_X \|\mu\|_Y}\geq \alpha.
\end{equation}
It is well-known (see, e.g., \cite{Ekeland-Temam99,Has-Hla-Necas96}) that problem  $(SP)$ possesses a uniques solution  which is exactly the saddle point of the corresponding {\it energy functional} 
$\mathcal{E}:X\times \Lambda\rightarrow \mathbb{R}$, 
$
\mathcal{E}(v,\mu):=\frac{1}{2}a(v,v)+b(v,\mu)-(f,v),
$
i.e.
$$
\mathcal{E}(u,\mu) \leq \mathcal{E}(u,\lambda) \leq \mathcal{E}(v,\lambda), \ \; \forall v\in X,\ \forall \mu\in\Lambda,
$$
hence the name for $(SP)$.

Over the recent years several generalizations of $(SP)$ have been considered in connection to the weak solvability of unilateral frictionless or bilateral frictional contact problems for linearly elastic materials. We briefly present some of these generalizations below.

\begin{enumerate}[$(a)$]
\item Find $(u, \lambda)\in X\times\Lambda$ such that 
$$
(S_1):\ \; \left\{ 
\begin{array}{ll}
\langle A(u),v \rangle+b(v,\lambda) = \langle f,v\rangle, & \ \forall v\in X,\\
 b(u,\mu-\lambda)\leq 0,& \ \forall \mu\in \Lambda,
\end{array}
\right.
$$
where $X,Y$ are reflexive Banach spaces, $f\in X^\ast$, $A:X\rightarrow X^\ast$ is a nonlinear operator and $b:X\times Y\rightarrow \mathbb{R}$ is a bilinear continuous form satisfying \eqref{inf-sup Prop}. Existence of solutions for $(S_1)$ was established by Matei \cite{Matei-RWA14} under the assumption that $A$ is hemicontinuous and satisfies a generalized monotonicity property who then used the theoretical results to derive the weak solvability of contact problems model the antiplane shear deformation of cylindrical bodies in frictional contact with a rigid foundation.

\item Find $(u, \lambda)\in X\times\Lambda$ such that 
$$
(S_2):\ \; \left\{ 
\begin{array}{ll}
\langle A(u),v-u \rangle+b(v-u,\lambda)+J^0(\gamma u;\gamma v-\gamma u) \geq \langle f,v-u\rangle, & \ \forall v\in X,\\
 b(u,\mu-\lambda)\leq 0,& \ \forall \mu\in \Lambda,
\end{array}
\right.
$$
where $X,Y$ are reflexive Banach spaces, $Z$ is a Banach space, $f\in X^\ast$, $A:X\rightarrow X^\ast$ is a nonlinear operator, $J:Z\rightarrow \mathbb{R}$ is locally Lipschitz, $\gamma:X\rightarrow Z$ is a linear and continuous operator and $b:X\times Y\rightarrow \mathbb{R}$ is again a bilinear continuous form satisfying \eqref{inf-sup Prop}. Existence and uniqueness results were established by Bai, Mig\'{o}rski \& Zeng \cite{Bai-Mig-Zeng20} and various applications to contact mechanics were provided in \cite{Mig-Bai-Zeng19}.

\item Find $(u, \lambda)\in X\times\Lambda$ such that 
$$
(S_3):\ \; \left\{ 
\begin{array}{ll}
J(v)-J(u)+b(v-u,\lambda)+\varphi(v)-\varphi(u) \geq ( f,v-u), & \ \forall v\in X,\\
 b(u,\mu-\lambda)\leq 0,& \ \forall \mu\in \Lambda,
\end{array}
\right.
$$
where $X,Y$ Hilbert spaces, $f\in X$, $J,\varphi:X\rightarrow [0,\infty)$ are convex and lower semicontinuous functionals  and $b:X\times Y\rightarrow \mathbb{R}$ is bilinear, continuous and satisfies \eqref{inf-sup Prop}. Problem $(S_3)$ was investigated by Matei \cite{Matei15} under the assumptions that $J$ and $\varphi$ satisfy appropriate boundedness conditions.

\end{enumerate}
 It is easy to check that systems $(S_1)-(S_3)$ are particular cases of $(S)$; for example it suffices to choose $K:=X$, $B(v,\mu):=b(v,\mu)$, $\chi(u,v):=\langle A(u),v\rangle +J^0(\gamma u;\gamma v) $, $\psi(\lambda,\mu):=0$ and $g:=0_{Y^\ast}$ in order to get $(S_2)$. However, in all the above examples it is assumed $b$ is bilinear and continuous whereas we do not impose neither linearity, nor the "inf-sup property" on the functional $B$, thus allowing the study of fully nonlinear coupled systems. Moreover, the above systems consist either of two variational inequalities or a hemivariational inequality and a variational one. By adding the functional $\psi$ we can study (as particular cases) systems consisting of two hemivariational (or even quasi-hemivariational) inequalities and this can turn out useful also for applications to contact mechanics as it will allow larger classes of frictional problems to be modelled. 
 
Assume $A:X\rightarrow X^\ast$ and $F:Y\rightarrow Y^\ast$ are nonlinear operators, $h:X\rightarrow [0,\infty)$, $J:Z_1\rightarrow \mathbb{R}$ and $G:Z_2\rightarrow \mathbb{R}$ are locally Lipschitz functionals and $\gamma:X\rightarrow Z_1$, $i:Y\rightarrow Z_2$ are linear and continuous operators. If we choose $\chi(u,v):=\langle A(u), v\rangle +h(u)J^0(\gamma u;\gamma v)$ and $\psi(\lambda,\mu):=\langle F(\lambda),\mu\rangle+G^0(i\lambda;i\mu)$, then $(S)$ becomes the following coupled system of a quasi-hemivariational inequality and a hemivariational inequality:
 
Find $(u,\lambda)\in K\times\Lambda$ such that 
$$
(S_4):\ \; \left\{ 
\begin{array}{ll}
\langle A(u),v-u \rangle + B(v, \lambda)-B(u, \lambda)+h(u)J^0(\gamma u;\gamma v-\gamma u)\geq \langle f,v-u\rangle, & \ \forall v\in K,\\
\langle F(\lambda),\mu-\lambda\rangle +B(u, \lambda)-B(u,\mu)+G^0(i\lambda;i \mu-i\lambda)\geq \langle g,\mu- \lambda \rangle ,& \ \forall \mu\in \Lambda.
\end{array}
\right.
$$
To our best knowledge systems of type $(S_4)$ have not yet been studied. Moreover, very little can be found in the literature  even in the {\it decoupled case}, i.e.,  when $B\equiv 0$.

The following two theorems will play a key role in proving the main results. The first theorem is a refinement of Ky Fan's minimax principle due to Brezis, Nirenberg and Stampacchia while second is a variational alternative due to Mosco.
\begin{theorem}[Brezis, Nirenberg \& Stampacchia \cite{B-N-S72}]\label{B-N-S}
Let $\mathcal{K}$ be a nonempty convex subset of a Hausdorff topological (real) vector space and $h:\mathcal{K}\times \mathcal{K}\rightarrow \RR$ a function satisfying:
\begin{enumerate}[$(a)$]

\item  $h(x,x)\leq 0$ for each $x\in \mathcal{K}$;

\item  for each $x\in \mathcal{K}$, the set $\{y\in \mathcal{K}:\ h(x,y)> 0\} $ is convex;

\item  for each $y\in \mathcal{K}$, $x\mapsto h(x,y)$ is lower semicontinuous on the intersection of $\mathcal{K}$ with any finite dimensional subspace of $E$;

\item  whenever $x,y\in \mathcal{K}$ and $\{x_\alpha\}$ is a net converging to $x$, then $h(x_\alpha,(1-t)x+ty)\leq 0$ for all $t\in [0,1]$ implies $h(x,y)\leq 0$;

\item  there exist a compact subset  $\mathcal{K}_0$ of $E$  and $y_0\in \mathcal{K}_0\cap \mathcal{K}$ such that $h(x,y_0)>0$ for all $x\in \mathcal{K}\setminus \mathcal{K}_0$.
\end{enumerate}
Then there exists a point $x_0\in \mathcal{K}_0\cap \mathcal{K}$ such that 
 $$
 h(x_0,y)\leq 0, \ \; \forall y\in \mathcal{K}.
 $$
In particular, 
$
\inf\limits_{x\in \mathcal{K}}\sup\limits_{y\in \mathcal{K}}h(x,y)\leq 0.
$
\end{theorem}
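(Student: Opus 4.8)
The plan is to deduce the statement from the finite-dimensional Knaster--Kuratowski--Mazurkiewicz (KKM) principle (equivalently, from Ky Fan's inequality in finite dimensions, which ultimately rests on Brouwer's theorem), and then to pass to the limit using the compactness furnished by $(e)$ together with the closure-type condition $(d)$. To this end I would associate with each $y\in\mathcal{K}$ the \emph{solution set} $G(y):=\{x\in\mathcal{K}:\ h(x,y)\leq 0\}$, so that the conclusion amounts to showing that $\mathcal{K}_0\cap\bigcap_{y\in\mathcal{K}}G(y)\neq\emptyset$.

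First I would check that $\{G(y)\}_{y\in\mathcal{K}}$ enjoys the KKM property, i.e.\ $\mathrm{conv}\{y_1,\dots,y_n\}\subseteq\bigcup_{i=1}^n G(y_i)$ for every finite family. Indeed, if some convex combination $z=\sum_i\lambda_i y_i$ satisfied $h(z,y_i)>0$ for all $i$, then by $(b)$ the convex set $\{y:\ h(z,y)>0\}$ would contain each $y_i$, hence $z$ itself, giving $h(z,z)>0$ and contradicting $(a)$. Next comes the finite-dimensional reduction: for a finite set $F\subseteq\mathcal{K}$ with $y_0\in F$, put $K_F:=\mathrm{conv}(F)$, a compact convex subset of a finite-dimensional subspace of $E$. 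By $(c)$ each map $x\mapsto h(x,y)$ is lower semicontinuous on $K_F$, so $G(y)\cap K_F$ is closed in the compact set $K_F$; combined with the KKM property this lets the finite-dimensional KKM/Ky Fan result produce a point $x_F\in K_F$ with $h(x_F,y)\leq 0$ for every $y\in K_F$, in particular for every $y\in F$. Since $y_0\in F$ we have $h(x_F,y_0)\leq 0$, so the contrapositive of $(e)$ forces $x_F\in\mathcal{K}_0$.

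Thus the net $\{x_F\}$, indexed by the finite subsets of $\mathcal{K}$ containing $y_0$ and directed by inclusion, lives in the compact set $\mathcal{K}_0$ and admits a subnet $x_{F_\alpha}\to x_0\in\mathcal{K}_0$ (the limit being unique since $E$ is Hausdorff). It then remains to upgrade $x_0$ to a genuine solution, and this is where condition $(d)$ enters. Fixing $y\in\mathcal{K}$, I would restrict attention to the cofinal collection of indices $\alpha$ for which $\{x_0,y,y_0\}\subseteq F_\alpha$; for such $\alpha$ the whole segment $\{(1-t)x_0+ty:\ t\in[0,1]\}$ lies in the convex set $K_{F_\alpha}$, whence $h(x_{F_\alpha},(1-t)x_0+ty)\leq 0$ for all $t\in[0,1]$, and $(d)$ yields $h(x_0,y)\leq 0$.

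The delicate point, which I expect to be the main obstacle, is the need to know that $x_0\in\mathcal{K}$ (so that $(d)$ applies and indeed $x_0\in\mathcal{K}_0\cap\mathcal{K}$) even though $\mathcal{K}$ is only assumed convex and the $x_F$ merely converge inside the compact set $\mathcal{K}_0$; reconciling this membership with the requirement that the relevant segment belong to $K_{F_\alpha}$ is the crux of the argument. This is precisely the role played by the pair of hypotheses $(c)$ and $(d)$, which together substitute for the global lower semicontinuity of $x\mapsto h(x,y)$ that one would otherwise need in the classical Ky Fan inequality, and whose careful handling constitutes the technical heart of the proof.
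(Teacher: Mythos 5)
First, a point of reference: the paper does not prove this theorem at all --- it is quoted from \cite{B-N-S72} and used as a black box in Lemma 2.1 --- so there is no in-paper proof to compare against, and your proposal has to stand on its own. Your strategy (KKM property of the sets $G(y)$ via $(a)$ and $(b)$, the finite-dimensional Ky Fan/KKM theorem on the compact hulls $K_F$ using $(c)$, confinement of the solutions $x_F$ to $\mathcal{K}_0$ via the contrapositive of $(e)$, extraction of a convergent subnet, and finally $(d)$ applied along the residual set of indices with $\{x_0,y,y_0\}\subseteq F_\alpha$) is exactly the classical Brezis--Nirenberg--Stampacchia argument, and every step you actually carry out is correct.

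The problem is the step you explicitly defer: you never prove $x_0\in\mathcal{K}$, and without it the proof does not close. You need $x_0\in\mathcal{K}$ three times: to assert the conclusion $x_0\in\mathcal{K}_0\cap\mathcal{K}$, to form the finite subsets $F_\alpha\supseteq\{x_0,y,y_0\}$ of $\mathcal{K}$ (and hence to know that the segment from $x_0$ to $y$ lies in $K_{F_\alpha}$), and to invoke $(d)$, which is a statement only about points $x,y\in\mathcal{K}$. Nothing in the hypotheses delivers this membership: $\mathcal{K}$ is only convex, not closed, and $\mathcal{K}_0$ is compact in $E$ but is not assumed to be contained in $\mathcal{K}$, so the subnet limit $x_0$ may well land in $\overline{\mathcal{K}}\setminus\mathcal{K}$. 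This is not a technicality that more care will repair: take $E=\mathbb{R}$, $\mathcal{K}=(0,1]$, $h(x,y)=x-y$, $\mathcal{K}_0=[0,\tfrac12]$, $y_0=\tfrac12$; then $(a)$--$(e)$ all hold, your finite-dimensional solutions are $x_F=\min F\to 0\notin\mathcal{K}$, and there is no $x_0\in\mathcal{K}$ with $h(x_0,y)\le 0$ for all $y\in\mathcal{K}$. So the statement as transcribed requires some additional closedness hypothesis (e.g.\ $\mathcal{K}$ closed, or $\mathcal{K}_0\cap\overline{\mathcal{K}}\subseteq\mathcal{K}$), and any correct proof must use it at precisely the point you left open. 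In the only place the paper applies the theorem, $\mathcal{K}=K\times\Lambda$ is weakly compact, hence weakly closed, so $x_0\in\mathcal{K}$ is automatic and your argument then goes through verbatim; but as written the proposal has a genuine gap at its self-declared ``crux''.
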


\begin{theorem}[Mosco \cite{mosco}]\label{Mosco}
Let $\mathcal{K}$ be a nonempty, compact and convex subset of a topological
vector space $E$ and $\varphi:E\ri (-\infty,\infty]$ be a proper,
convex and lower semicontinuous functional such that ${\cal
D}(\varphi)\cap \mathcal{K}\neq\emptyset$. Assume $T,U:E\times E\ri\RR$ are
two functions that satisfy:
\begin{enumerate}[$(a)$]
\item  $U(x,y)\leq T(x,y), \mbox{ for all } x,y\in E$;

\item  $x\mapsto T(x,y)$ is a concave mapping for each $y\in E$;

\item  $y\mapsto U(x,y)$ is a lower semicontinous mapping for
each $x\in E$.

\end{enumerate}
Then for each $a\in\mathbb{R}$  the following alternative holds:
\begin{description}
\item $\bullet$ either there exists $y_0\in {\cal D}(\varphi)\cap \mathcal{K}$ such that $U(x,y_0)+\varphi(y_0)-\varphi(x)\leq a$,
 $\mbox{ for all } x\in E$,

\item or,

\item $\bullet$ there exists $x_0\in E$ such that $T(x_0,x_0)>a$.
\end{description}
\end{theorem}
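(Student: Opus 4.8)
The plan is to establish the dichotomy by fixing $a\in\RR$, assuming the second alternative fails, and producing the point required by the first. So I would suppose that $T(x,x)\le a$ for every $x\in\mathcal{K}$ and construct $y_0$. The whole argument is an intersection (KKM) argument carried out on the compact convex set $\mathcal{K}$, and the two hypotheses $U\le T$ and the concavity of $T(\cdot,y)$ enter precisely in order to push a contradiction onto the diagonal values $T(z,z)$, which is what the failure of the second alternative controls.

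First I would recast the first alternative as a nonempty–intersection statement. For each $x\in\mathcal{K}$ set
$$C_x:=\{\,y\in\mathcal{K}:\ U(x,y)+\varphi(y)-\varphi(x)\le a\,\}.$$
By hypothesis $(c)$ the map $y\mapsto U(x,y)$ is lower semicontinuous and $\varphi$ is lower semicontinuous, so each $C_x$ is closed in $\mathcal{K}$, hence compact. A point $y_0$ realises the first alternative exactly when $y_0\in\bigcap_{x\in\mathcal{K}}C_x$; moreover, choosing any $x_\ast\in\mathcal{D}(\varphi)\cap\mathcal{K}$ one checks $\bigcap_x C_x\subseteq C_{x_\ast}\subseteq\mathcal{D}(\varphi)$, so the requirement $y_0\in\mathcal{D}(\varphi)\cap\mathcal{K}$ comes for free. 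Thus it suffices to show $\bigcap_x C_x\neq\emptyset$, and by compactness of $\mathcal{K}$ this reduces to verifying that $\{C_x\}_{x\in\mathcal{K}}$ has the finite intersection property, which I would deduce from the Fan--KKM principle (the same circle of ideas underlying Theorem \ref{B-N-S}).

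The heart of the proof is checking that $\{C_x\}_{x\in\mathcal{K}}$ is a KKM family: for every finite set $x_1,\dots,x_n\in\mathcal{K}$ and every convex combination $z=\sum_{i=1}^n t_i x_i$ (with $t_i\ge0$, $\sum_i t_i=1$) one must have $z\in\bigcup_i C_{x_i}$. Note $z\in\mathcal{K}$ by convexity, which is exactly what makes the diagonal hypothesis applicable to it. Arguing by contradiction, suppose $z\notin C_{x_i}$ for all $i$, i.e. $U(x_i,z)+\varphi(z)-\varphi(x_i)>a$ for each $i$ (indices with $\varphi(x_i)=+\infty$ are automatically fine, so I may assume each $x_i\in\mathcal{D}(\varphi)$). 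Using $(a)$ to replace $U$ by $T$, then multiplying by $t_i$ and summing, the concavity of $x\mapsto T(x,z)$ from $(b)$ and the convexity of $\varphi$ yield
$$a\;<\;\sum_i t_i\,T(x_i,z)\;-\;\sum_i t_i\,\varphi(x_i)\;+\;\varphi(z)\;\le\;T(z,z)\;-\;\varphi(z)\;+\;\varphi(z)\;=\;T(z,z),$$
where the first inequality is the summed strict inequalities and the second uses $\sum_i t_i T(x_i,z)\le T(z,z)$ and $\sum_i t_i\varphi(x_i)\ge\varphi(z)$. This contradicts $T(z,z)\le a$, so the KKM property holds. The Fan--KKM lemma (a compact member $C_{x_\ast}$ being present) then gives a point $y_0\in\bigcap_x C_x$, which satisfies $U(x,y_0)+\varphi(y_0)-\varphi(x)\le a$ for all $x\in\mathcal{K}$ and lies in $\mathcal{D}(\varphi)\cap\mathcal{K}$ — the first alternative.

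I expect the KKM verification to be the only delicate step, and it is there that the two-function structure is indispensable: $U\le T$ lets one pass from the lower semicontinuous bifunction $U$ (which governs closedness of the sets $C_x$) to the concave bifunction $T$ (which governs the convex-combination estimate), and the concavity of $T(\cdot,z)$ together with the convexity of $\varphi$ must land the estimate exactly on the diagonal value $T(z,z)$. A direct appeal to the single-function Theorem \ref{B-N-S} does not seem to suffice, since no one function is simultaneously lower semicontinuous in the correct slot and concave in the other; this is precisely the role of splitting the data into the pair $U\le T$. A final technical point to watch is the compactness bookkeeping — the diagonal bound is only used at points of $\mathcal{K}$ (namely the convex combinations $z$), so the relevant quantifications are governed by the compact convex set $\mathcal{K}$, and it is this compactness that drives the finite-intersection/KKM step.
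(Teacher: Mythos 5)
The paper does not prove this statement --- it is quoted as a known result of Mosco and used as a black box --- so there is no in-paper argument to compare against. On its own terms, your KKM argument is the standard proof of Mosco's alternative and its core is correct: each $C_x$ is closed in the compact set $\mathcal{K}$ by hypothesis $(c)$ and the lower semicontinuity of $\varphi$; the finite-intersection verification correctly combines $U\le T$, the concavity of $T(\cdot,z)$ and the convexity of $\varphi$ to land on the diagonal value $T(z,z)$ and contradict $T(z,z)\le a$ at the convex combination $z\in\mathcal{K}$; and both the reduction to $x_i\in\mathcal{D}(\varphi)$ and the observation $\bigcap_x C_x\subseteq C_{x_\ast}\subseteq\mathcal{D}(\varphi)$ are sound. (Fan--KKM needs $E$ Hausdorff, an assumption the statement omits; this is a defect of the statement rather than of your proof.)

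The one point you gloss over is the quantifier in the first alternative. Your intersection argument yields $U(x,y_0)+\varphi(y_0)-\varphi(x)\le a$ for all $x\in\mathcal{K}$, whereas the statement asserts it for all $x\in E$. These do not coincide in general: for $x\notin\mathcal{D}(\varphi)$ the inequality is trivially true, but for $x\in\mathcal{D}(\varphi)\setminus\mathcal{K}$ nothing forces it, and the statement as literally written is false. Indeed, take $E=\mathbb{R}$, $\mathcal{K}=[0,1]$, $\varphi\equiv 0$, $U=T$ with $T(x,y)=x-y$ and $a=0$: all hypotheses hold, $T(x,x)=0$ for every $x$ so the second alternative fails, yet no $y_0\in[0,1]$ satisfies $x-y_0\le 0$ for every $x\in\mathbb{R}$. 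So what you actually prove is the correct (and standard) version of Mosco's theorem, with the first alternative quantified over $\mathcal{K}$; the ``for all $x\in E$'' form is recovered exactly when $\mathcal{D}(\varphi)\subseteq\mathcal{K}$, which is the situation in which the paper applies the theorem (there $\varphi$ dominates the indicator function of $\mathcal{K}$, so the inequality is vacuous off $\mathcal{K}$). You should either state explicitly that this is the version you establish or note that the hypothesis $\mathcal{D}(\varphi)\subseteq\mathcal{K}$ is needed for the conclusion as written.
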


\section{A wide variety of existence results}

In this section we  establish the existence of at least one  solution for system $(S)$, first under the assumption that $K$ and $\Lambda$ are bounded, then  we impose additional coercivity conditions to derive the existence of solutions when at least one of the sets is unbounded. The first set of assumptions is given below.

\begin{description}
\item ${\bf (H_B^1)}$ $B:X\times Y\rightarrow \mathbb{R}$ is a functional such that:

\begin{enumerate}[$(i)$]
\item for each $\lambda\in Y$, the mapping  $u\mapsto B(u,\lambda)$ is convex and lower semicontinuous;

\item for each $u\in X$, the mapping $\lambda \mapsto B(u,\lambda)$ is concave and upper semicontinuous;
\end{enumerate}

\medskip

\item ${\bf (H_\chi^1)}$ $\chi:X\times X\rightarrow \mathbb{R}$ is a  functional such that:
\begin{enumerate}[$(i)$]
 
 \item for each $v\in X$, the mapping $u\mapsto \chi(u,v-u)$ is weakly upper  semicontinuous;
 
 \item for each $u\in X$, the mapping $v\mapsto\chi(u,v)$ is convex; 
   
 \item $\chi(u,0_X)= 0$ for all $u\in X$.

\end{enumerate}

\medskip

\item ${\bf (H_\psi^1)}$ $\psi:Y\times Y\rightarrow \mathbb{R}$ is a functional such that:
\begin{enumerate}[$(i)$]
   \item for each $\mu\in Y$, the mapping $\lambda\mapsto \psi(\lambda,\mu-\lambda)$ is weakly upper  semicontinuous;
   
   \item for each $\lambda\in Y$, the mapping $\mu\mapsto\psi(\lambda,\mu)$ is convex; 
  
  \item $\psi(\lambda,0_Y)=0$ for all $\lambda\in Y$.
\end{enumerate}

\end{description}

\begin{lemma}\label{BoundedCase1}
Assume $X$ and $Y$ are real reflexive Banach spaces and $K\subset X$ and $\Lambda \subset Y$ are nonempty, bounded, closed and convex subsets. If conditions ${\bf (H_B^1),\ (H_\chi^1)}$ and ${\bf (H_\psi^1)}$ hold, then for any pair  $(f,g)\in X^\ast\times Y^\ast$ the system  $(S)$ possesses at least one solution.
\end{lemma}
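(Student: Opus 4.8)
The plan is to merge the two inequalities of $(S)$ into a single Ky Fan type inequality on the product set and then invoke Theorem~\ref{B-N-S}. I would work on $E:=X\times Y$ endowed with its weak topology and set $\mathcal{K}:=K\times\Lambda$, which is nonempty and convex and, being bounded, closed and convex in a reflexive space, is weakly compact. For $x=(u,\lambda)$ and $y=(v,\mu)$ in $\mathcal{K}$ I would define the combined bifunction as (minus) the sum of the two inequality residuals,
$$
h(x,y):=\langle f,v-u\rangle+\langle g,\mu-\lambda\rangle-B(v,\lambda)+B(u,\mu)-\chi(u,v-u)-\psi(\lambda,\mu-\lambda).
$$
The decisive algebraic feature is that the two coupling terms $\mp B(u,\lambda)$ contributed by the first and second inequalities cancel; this cancellation is exactly what will rescue the first-variable semicontinuity below.

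I would then check the five hypotheses of Theorem~\ref{B-N-S}. Condition $(a)$ is immediate, since $h(x,x)=-\chi(u,0_X)-\psi(\lambda,0_Y)=0$ by ${\bf (H_\chi^1)}(iii)$ and ${\bf (H_\psi^1)}(iii)$. For $(b)$ I would note that $y\mapsto h(x,y)$ is concave: $v\mapsto -B(v,\lambda)$ is concave by ${\bf (H_B^1)}(i)$, $\mu\mapsto B(u,\mu)$ is concave by ${\bf (H_B^1)}(ii)$, the maps $v\mapsto -\chi(u,v-u)$ and $\mu\mapsto -\psi(\lambda,\mu-\lambda)$ are concave by ${\bf (H_\chi^1)}(ii)$ and ${\bf (H_\psi^1)}(ii)$, and the duality pairings are affine; concavity forces the superlevel set $\{y:h(x,y)>0\}$ to be convex.

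The heart of the argument, and the step I expect to be the main obstacle, is the first-variable weak lower semicontinuity needed for $(c)$ and $(d)$. Here the cancellation pays off: in $x\mapsto h(x,y)$ the surviving $B$-terms are $u\mapsto B(u,\mu)$ and $\lambda\mapsto -B(v,\lambda)$, both convex and lower semicontinuous (by ${\bf (H_B^1)}$), hence weakly lower semicontinuous by Mazur's theorem, whereas $u\mapsto -\chi(u,v-u)$ and $\lambda\mapsto -\psi(\lambda,\mu-\lambda)$ are weakly lower semicontinuous directly from ${\bf (H_\chi^1)}(i)$ and ${\bf (H_\psi^1)}(i)$; a lone uncancelled $-B(u,\lambda)$, being \emph{concave} in $u$, would wreck this. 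Weak lower semicontinuity of $x\mapsto h(x,y)$ gives $(c)$ at once, since on finite-dimensional sections the weak and norm topologies agree, and it also yields $(d)$: if $x_\alpha\weak x$ with $h(x_\alpha,(1-t)x+ty)\le 0$ for all $t\in[0,1]$, then taking $t=1$ gives $h(x_\alpha,y)\le 0$, whence $h(x,y)\le\liminf_\alpha h(x_\alpha,y)\le 0$. Finally $(e)$ is vacuous: choosing $\mathcal{K}_0:=\mathcal{K}$ (weakly compact) makes $\mathcal{K}\setminus\mathcal{K}_0=\emptyset$.

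Theorem~\ref{B-N-S} then delivers $x_0=(u,\lambda)\in\mathcal{K}$ with $h(x_0,y)\le 0$ for every $y=(v,\mu)\in\mathcal{K}$, that is,
$$
\bigl[B(v,\lambda)-B(u,\lambda)+\chi(u,v-u)-\langle f,v-u\rangle\bigr]+\bigl[B(u,\lambda)-B(u,\mu)+\psi(\lambda,\mu-\lambda)-\langle g,\mu-\lambda\rangle\bigr]\ge 0.
$$
To split this back into the two inequalities of $(S)$ I would again exploit the normalizations: taking $\mu=\lambda$ annihilates the second bracket (as $\psi(\lambda,0_Y)=0$) and leaves the first inequality for all $v\in K$, while taking $v=u$ annihilates the first bracket (as $\chi(u,0_X)=0$) and leaves the second inequality for all $\mu\in\Lambda$. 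Hence $(u,\lambda)$ solves $(S)$, which completes the plan.
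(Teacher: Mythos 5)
Your proposal is correct and follows essentially the same route as the paper: the same combined bifunction $h$ on $K\times\Lambda$ with the weak topology, the same verification of the Brezis--Nirenberg--Stampacchia hypotheses via concavity in the second variable and weak lower semicontinuity in the first, and the same splitting using $\chi(u,0_X)=\psi(\lambda,0_Y)=0$. The only (harmless) deviation is in condition $(e)$, where you take $\mathcal{K}_0:=\mathcal{K}$ so that the condition is vacuous, whereas the paper takes $\mathcal{K}_0$ to be the weakly closed sublevel set $\{h(\cdot,[v_0,\mu_0])\le 0\}$; both choices work because $\mathcal{K}$ is weakly compact.
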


\begin{proof}
Let $E:=X\times Y$ and $\mathcal{K}:=K\times \Lambda\subset E$.  Here and hereafter, we denote the elements of $E$ as $[u,\lambda]$. Since $K$ and $\Lambda$ are bounded, closed and convex it follows that $\mathcal{K}$ is weakly compact.  We define the functional $h:\mathcal{K}\times\mathcal{K}\rightarrow \mathbb{R}$ by 
$$
h([u,\lambda],[v,\mu]):=\langle f,v-u\rangle+\langle g,\mu-\lambda\rangle -\chi(u,v-u)-\psi(\lambda,\mu-\lambda)+B(u,\mu)-B(v,\lambda),
$$
and we prove next that this functional satisfies the conditions of the Brezis-Nirenberg-Stampacchia minimax principle (see Theorem \ref{B-N-S}). We have 
$$
h([u,\lambda],[u,\lambda])=0, \ \;\forall [u,\lambda]\in\mathcal{K},
$$
hence $(a)$ of Theorem \ref{B-N-S} holds. In order to prove $(b)$ we fix $[u,\lambda]\in \mathcal{K}$ and assume $t\in[0,1]$ and $[v_i,\mu_i]\in C([u,\lambda])$, $i=1,2$, where
$$
C([u,\lambda]):=\left\{ [v,\mu]\in \mathcal{K}:\  h([u,\lambda],[v,\mu])>0\right\}.
$$
Thus,
\begin{align*}
h([u&,\lambda], t[v_1,\mu_1]+(1-t)[v_2,\mu_2])=\langle f,tv_1+(1-t)v_2 -u\rangle+\langle g,t\mu_1+(1-t)\mu_2-\lambda\rangle\\
&-\chi(u,tv_1+(1-t)v_2-u)-\psi(\lambda,t\mu_1+(1-t)\mu_2-\lambda)+B(u,t\mu_1+(1-t)\mu_2)\\
&-B(tv_1+(1-t)v_2,\lambda)\geq t\left[ \langle f,v_1-u\rangle \langle g,\mu_1-\lambda\rangle-\chi(u,v_1-u)+B(u,\mu_1)-B(v_1,\sigma)\right]\\
&+(1-t)\left[ \langle f,v_2-u\rangle \langle g,\mu_2-\lambda\rangle -\chi(u,v_2-u)\right. \left.+B(u,\mu_2)-B(v_2,\sigma)\right]>0,
\end{align*}
which shows that $t[v_1,\mu_1]+(1-t)[v_2,\mu_2]\in C([u,\lambda])$, i.e., $C([u,\lambda])$ is a convex subset of $\mathcal {K}$. 

Now, let us fix $[v,\mu]\in \mathcal{K} $ and assume $[u_n,\lambda_n]\rightharpoonup [u,\lambda]$ as $n\rightarrow \infty$. Then hypotheses ${\bf (H_B^1)}$, ${\bf (H_\chi^1)}$, ${\bf (H_\psi^1)}$ ensure the following estimates hold
$$
\liminf\limits_{n\rightarrow \infty} \langle f,v-u_n\rangle =\langle f,v-u\rangle \mbox{ and } \liminf\limits_{n\rightarrow \infty} \langle g,\mu-\lambda_n\rangle =\langle g,\mu-\lambda\rangle,
$$
$$
\liminf\limits_{n\rightarrow \infty} (-\chi( u_n,v- u_n))=-\limsup\limits_{n\rightarrow \infty} \chi( u_n,v- u_n)\geq -\chi( u,v-u),
$$
$$
\liminf\limits_{n\rightarrow \infty} (-\psi( \lambda_n,\mu- \lambda_n))=-\limsup\limits_{n\rightarrow \infty} \psi( \lambda_n,\mu- \lambda_n)\geq -\psi( \lambda,\mu- \lambda),
$$
and $$
\liminf\limits_{n\rightarrow \infty} B(u_n, \mu)\geq B(u, \mu) \mbox{ and }
\liminf\limits_{n\rightarrow \infty} (-B(v, \lambda_n))=-\limsup\limits_{n\rightarrow \infty} (B(v, \lambda_n)\geq B(v, \lambda).
$$
This means that for each $[v,\mu]\in \mathcal{K}$ the mapping $[u,\lambda]\mapsto h([u,\lambda],[v,\mu])$ is weakly lower semicontinuous on $\mathcal{K}$, hence conditions $(c)$ is automatically  fulfilled.  Moreover, if $[u,\lambda], [v,\mu]\in\mathcal{K}$ are fixed and $[u_\alpha,\lambda_\alpha]$ is a net in $\mathcal{K}$ such that $[u_\alpha,\lambda_\alpha]\rightharpoonup [u,\lambda]$ and 
$$
h([u_\alpha,\lambda_\alpha],[(1-t)u+tv,(1-t)\lambda+t\mu])\leq 0, \ \; \forall t\in [0,1],
$$
then for any $t\in [0,1]$ one has
$$
0\geq \liminf\limits_{\alpha} h([u_\alpha,\lambda_\alpha],[(1-t)u+tv,(1-t)\lambda+t\mu])\geq h([u,\lambda],[(1-t)u+tv,(1-t)\lambda+t\mu]).
$$
Choosing $t:=1$ in the previous relation we infer that $(d)$ also holds. In order order to prove the last condition of Theorem \ref{B-N-S} fix $[v_0,\mu_0]\in \mathcal{K}$ and define 
$$
\mathcal{K}_0:=\left\{ [u,\lambda]\in\mathcal{K}:\ h([u,\lambda],[v_0,\mu_0])\leq0\right\}.
$$
The set $\mathcal{K}_0$ turns out nonempty as $[v_0,\mu_0]\in\mathcal{K}_0$ and weakly closed  due to the weakly lower semicontinuity of $[u,\lambda]\mapsto h([u,\lambda], [v_0,\mu_0])$. Consequently, $\mathcal{K}_0$ is weakly compact and 
$$
h([u,\lambda],[v_0,\mu_0])>0,\mbox{ for all } [u,\lambda]\in \mathcal{K}\setminus\mathcal{K}_0.
$$
Applying Theorem \ref{B-N-S} for $E$ endowed with the weak topology (keep in mind that this is a Hausdorff topological vector space) we get the existence of an element $[u_0,\lambda_0]\in \mathcal{K}_0$ such that 
\begin{equation}\label{SolBound}
h([u_0,\lambda_0],[v,\mu])\leq 0,\mbox{ for all }[v,\mu]\in \mathcal{K}.
\end{equation}
In order to complete the proof it suffices to show that $[u_0,\lambda_0]$ solves $(S)$. Taking $\mu:=\lambda_0$ in \eqref{SolBound} we get
$$
\langle f,v-u_0\rangle-\chi(u_0,v-u_0)+B(u,\lambda_0)-B(v,\lambda_0)\leq 0,\mbox{ for all }v\in K,
$$
which is exactly the first inequality of system $(S)$. The second inequality of $(S)$ is obtained by taking $v:=u_0$ in \eqref{SolBound}.
\qed
\end{proof}

Now let us consider a second set of assumptions on the functionals $B$, $\chi$ and $\psi$. By combining the two sets of assumptions we obtain various existence results for our inequality system $(S)$. 

\begin{description}
\item ${\bf (H_B^2)}$ $B:X\times Y\to \mathbb{R}$ is a functional such that:

\begin{enumerate}[$(i)$]
\item for each $\mu\in Y$ the mapping $[u,\lambda]\mapsto 2B(u,\mu)-B(u,\lambda)$ is weakly lower semicontinuous;

\item for each $v\in X$ the mapping $[u,\lambda]\mapsto 2B(v,\lambda)-B(u,\lambda)$ is concave.
\end{enumerate} 

\bigskip

\item ${\bf (H_\chi^2)}$ $\chi:X\times X\rightarrow \mathbb{R}$ is a functional such that:
\begin{enumerate}[$(i)$]

 \item $\chi(u,u-v)+\chi(v,v-u)\geq 0$ for all $u,v\in X$;
 
 \item for each $u,v,w\in X$, the mapping $[0,1]\ni t\mapsto \chi(u+t(v-u),w)$ is continuous at $0_+$;
 
 \item for each $u\in X$,  $v\mapsto\chi(u,v)$ is concave, upper semicontinuous and positive homogeneous; 
 
 \item $\chi(u,0_X)=0$ for all $u\in X$.
\end{enumerate}

\medskip

\item ${\bf (H_\psi^2)}$ $\psi:Y\times Y\rightarrow \mathbb{R}$ is a functional such that:
\begin{enumerate}[$(i)$]

 \item $\psi(\lambda,\lambda-\mu)+\psi(\mu,\mu-\lambda)\geq 0$ for all $\lambda,\mu \in Y$;
 
 \item for each $\lambda,\mu,\sigma\in Y$, the mapping $[0,1]\ni t\mapsto \psi(\lambda+t(\mu-\lambda),\sigma)$ is continuous at $0_+$;
 
 \item for each $\lambda\in Y$,  $\mu\mapsto\psi(\lambda,\mu)$ is concave, upper semicontinuous and positive homogeneous; 
 
 \item $\psi(\lambda,0_Y)=0$ for all $\lambda\in Y$.
\end{enumerate}

\end{description}

\begin{lemma}\label{BoundedCase2}
Assume $X$ and $Y$ are real reflexive Banach spaces and $K\subset X$ and $\Lambda \subset Y$ are nonempty, bounded, closed and convex subsets. Assume in addition that  ${\bf (H_B^i),\ (H_\chi^j)}$ and ${\bf (H_\psi^k)}$ hold for $i,j,k\in\{1,2\}$. Then for any pair  $(f,g)\in X^\ast\times Y^\ast$ the system  $(S)$ possesses at least one solution.
\end{lemma}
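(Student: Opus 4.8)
The plan is to re-run the argument of Lemma~\ref{BoundedCase1} on $E:=X\times Y$ with its weak topology and $\mathcal{K}:=K\times\Lambda$ (weakly compact and convex), reducing $(S)$ to the existence of a point $[u_0,\lambda_0]\in\mathcal{K}$ at which an auxiliary bifunction is nonpositive in its second argument; specializing $\mu:=\lambda_0$ and $v:=u_0$ then splits this single inequality back into the two inequalities of $(S)$. When $i=j=k=1$ this is exactly Lemma~\ref{BoundedCase1}, so only the mixed cases need fresh work. The obstruction is that, as soon as a second-type hypothesis replaces a first-type one, the bifunction $h$ of Lemma~\ref{BoundedCase1} loses the properties demanded by Theorem~\ref{B-N-S}: under ${\bf (H_\chi^2)}$ the map $v\mapsto\chi(u,v-u)$ is \emph{concave} (rather than convex), so in the test variable the term $-\chi(u,v-u)$ becomes convex and the superlevel sets $\{[v,\mu]:h>0\}$ need no longer be convex, while the weak upper semicontinuity of $u\mapsto\chi(u,v-u)$ is no longer available. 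This is precisely the situation Mosco's alternative (Theorem~\ref{Mosco}) is built for: it requires only a \emph{concave} majorant $T(\cdot,y)$ in the first variable and a lower semicontinuous minorant $U(x,\cdot)$ in the second, tied together by $U\le T$, so I would replace the single Ky~Fan functional by such a pair of modifications of $h$.

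The device that makes the two evaluations interchangeable is a Minty-type monotonicity inequality. Under ${\bf (H_\chi^2)}$, concavity and positive homogeneity of $\chi(u,\cdot)$ give superadditivity, whence $\chi(u,w)+\chi(u,-w)\le\chi(u,0_X)=0$; taking $w:=v-u$ and adding the monotonicity relation ${\bf (H_\chi^2)(i)}$, $\chi(u,u-v)+\chi(v,v-u)\ge0$, yields
\[
\chi(u,v-u)\le\chi(v,v-u),\qquad\forall\,u,v\in X,
\]
and likewise $\psi(\lambda,\mu-\lambda)\le\psi(\mu,\mu-\lambda)$ under ${\bf (H_\psi^2)}$. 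Accordingly I would let $T$ carry the \emph{direct} evaluations $-\chi(u,v-u)$, $-\psi(\lambda,\mu-\lambda)$ and $U$ the \emph{Minty} evaluations $-\chi(v,v-u)$, $-\psi(\mu,\mu-\lambda)$ (the latter being lower semicontinuous in the solution variable $[u,\lambda]$, as negatives of concave upper semicontinuous functions composed with affine maps), the displayed inequalities furnishing exactly $U\le T$. On the diagonal all difference terms vanish and $\chi(\cdot,0_X)=\psi(\cdot,0_Y)=0$, so $T([v,\mu],[v,\mu])=0$; choosing $a:=0$ and $\varphi:=I_{\mathcal{K}}$ then rules out the second alternative of Theorem~\ref{Mosco} and forces the first, delivering $[u_0,\lambda_0]\in\mathcal{K}$ with $U(\cdot,[u_0,\lambda_0])\le 0$ on $\mathcal{K}$. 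The hemicontinuity ${\bf (H_\chi^2)(ii)}$, ${\bf (H_\psi^2)(ii)}$, run as in the standard Minty lemma by letting $t\to0_+$ along $u_0+t(v-u_0)$, then converts this Minty solution into an honest solution of $(S)$.

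The step I expect to be genuinely delicate — and the main obstacle — is arranging, \emph{simultaneously and uniformly across all eight combinations}, that $T$ is concave in the first variable and $U$ lower semicontinuous in the second while $U\le T$ is preserved. The concavity of the $\chi$- and $\psi$-terms sits on opposite sides of the two hypothesis sets, so the placement of each term into $T$ or $U$ must be made case by case, and one must check that no term routed into $T$ destroys its concavity in $[v,\mu]$. The coupling functional is the heart of the matter: under ${\bf (H_B^1)}$ the split $B(u,\mu)-B(v,\lambda)$ is already convex/concave and semicontinuous in the right variables, but under ${\bf (H_B^2)}$ only the combinations $2B(u,\mu)-B(u,\lambda)$ (weakly lower semicontinuous) and $2B(v,\lambda)-B(u,\lambda)$ (concave) are controlled, so I would route the coupling term through the identity
\[
B(u,\mu)-B(v,\lambda)=\tfrac12\big[2B(u,\mu)-B(u,\lambda)\big]-\tfrac12\big[2B(v,\lambda)-B(u,\lambda)\big],
\]
aiming to let the first bracket supply the lower semicontinuity of $U$ in $[u,\lambda]$ and the second the concavity needed for $T$. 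Reconciling this decomposition with the fact that Mosco's concavity is demanded in the \emph{first} variable whereas ${\bf (H_B^2)(ii)}$ delivers concavity in $[u,\lambda]$, and confirming that $U\le T$ survives once the $\chi$-, $\psi$- and $B$-contributions are assembled, is where the bulk of the technical work will lie.
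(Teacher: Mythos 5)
Your overall strategy coincides with the paper's: keep Lemma~\ref{BoundedCase1} for the case $i=j=k=1$ and, for every case involving a second-type hypothesis, apply Mosco's alternative (Theorem~\ref{Mosco}) on $E=X\times Y$ with the weak topology, $\varphi$ built from $I_{\mathcal K}$, $a=0$, a pair $T\ge U$ linked by the monotonicity conditions ${\bf (H_\chi^2)}(i)$, ${\bf (H_\psi^2)}(i)$, and a final Minty/hemicontinuity passage $t\to 0_+$ using positive homogeneity. However, your concrete construction of $T$ has a defect that would make Theorem~\ref{Mosco} inapplicable. You propose that $T$ carry the ``direct'' term $-\chi(u,v-u)$ (and $-\psi(\lambda,\mu-\lambda)$), with the first (test) variable $[v,\mu]$. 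Under ${\bf (H_\chi^2)}(iii)$ the map $v\mapsto\chi(u,v)$ is \emph{concave}, so $v\mapsto-\chi(u,v-u)$ is \emph{convex}, and $T(\cdot,[u,\lambda])$ fails the concavity hypothesis $(b)$ of Theorem~\ref{Mosco}. The correct choice (the paper's) is to put $+\chi(u,u-v)$ into $T$: this is concave in $v$, vanishes on the diagonal, and dominates the Minty term $-\chi(v,v-u)$ of $U$ \emph{directly} by ${\bf (H_\chi^2)}(i)$ --- no superadditivity detour is needed. Note that $+\chi(u,u-v)$ and $-\chi(u,v-u)$ are not equal in general (superadditivity only gives $\chi(u,u-v)\le-\chi(u,v-u)$), so this is a genuine substitution, not a rewriting.

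The second gap is the routing of $B$ under ${\bf (H_B^2)}$, which you yourself flag as unresolved. Your identity $B(u,\mu)-B(v,\lambda)=\tfrac12\bigl[2B(u,\mu)-B(u,\lambda)\bigr]-\tfrac12\bigl[2B(v,\lambda)-B(u,\lambda)\bigr]$ is algebraically true but collapses back to $B(u,\mu)-B(v,\lambda)$, whose concavity in the test variable $[v,\mu]$ is exactly what ${\bf (H_B^2)}$ does \emph{not} provide. The combination that works is $2B(u,\mu)-B(u,\lambda)-B(v,\mu)$ (test variable $[v,\mu]$, solution variable $[u,\lambda]$): it vanishes on the diagonal, it is weakly lower semicontinuous in $[u,\lambda]$ by ${\bf (H_B^2)}(i)$ (with $\mu$ fixed), and it is concave in $[v,\mu]$ by ${\bf (H_B^2)}(ii)$ read with the solution's first component in the role of the fixed argument, i.e.\ $[v,\mu]\mapsto 2B(u,\mu)-B(v,\mu)$ is concave. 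With these two corrections --- $+\chi(u,u-v)$, $+\psi(\lambda,\lambda-\mu)$ in $T$ and the combination $2B(u,\mu)-B(u,\lambda)-B(v,\mu)$ under ${\bf (H_B^2)}$ --- your argument becomes the paper's proof; as written, the verification of hypothesis $(b)$ of Theorem~\ref{Mosco} fails.
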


\begin{proof}
We already proved the case when ${\bf (H_B^1),\ (H_\chi^1)}$ and ${\bf (H_\psi^1)}$ are fulfilled, therefore we need to consider the remaining cases.

Let $E:=X\times Y$ and $\mathcal{K}:=K\times \Lambda$ and define $\varphi: E\rightarrow (-\infty,\infty]$ by
$$
\varphi([u,\lambda]):=I_{\mathcal{K}}([u,\lambda])-\langle f,u\rangle-\langle g,\lambda\rangle, 
$$
where $I_{\mathcal{K}}$ is the {\it indicator function} of $\mathcal{K}$, i.e., 
$$
I_{\mathcal{K}}([u,\lambda]):=\left\{
\begin{array}{ll}
0, & \mbox{ if }[u,\lambda]\in\mathcal{K},\\
\infty,&\mbox{ otherwise}.
\end{array}
\right.
$$
Since $\mathcal{K}$ is nonempty, convex and closed, it follows that $\mathcal{K}$ is weakly compact and $\varphi$ is proper, convex and lower semicontinuous and thus weakly lower semicontinuous and $\mathcal{D}(\varphi)=\mathcal{K}$.

\begin{description}

\item {\sc Case 1.} ${\bf (H_B^2),\ (H_\chi^1)}$ and ${\bf (H_\psi^1)}$ hold.

Define $U: E\times E \to\mathbb{R}$ by 
$$
U([v,\mu],[u,\lambda]):=2B(u,\mu)-B(u,\lambda)-B(v,\mu)-\chi(u,v-u)-\psi(\lambda,\mu-\lambda).
$$
Then, $U$ is concave with respect to $[v,\mu]$ and weakly lower semicontinuous with respect to $[u,\lambda]$. Moreover, $U([v,\mu],[v,\mu])=0$ for all $[v,\mu]\in E$. Consequently, we can apply Mosco's Alternative (see Theorem \ref{Mosco}) with $T:=U$, $E$ endowed with the weak topology and $a:=0$ to get the existence of $[u_0,\lambda_0]\in \mathcal{K}$ such that 
$$
U([v,\mu],[u_0,\lambda_0])+\varphi([u_0,\lambda_0])-\varphi([v,\mu])\leq 0,\ \; \forall [v,\mu]\in E,
$$
which is equivalent to 
\begin{equation}\label{BoundedEstimate2}
2B(u_0,\mu)-B(u_0,\lambda_0)-B(v,\mu)-\chi(u_0,v-u_0)-\psi(\lambda_0,\mu-\lambda_0)+\langle f, v-u_0\rangle+\langle g,\mu-\lambda_0\rangle\leq 0, 
\end{equation}
for all $[v,\mu]\in \mathcal{K}$. 

Choosing $\mu:=\lambda_0$ in \eqref{BoundedEstimate2} one has 
$$
B(u_0,\lambda_0)-B(v,\lambda_0)-\chi(u_0,v-u_0)+\langle f,v-u_0 \rangle \leq 0, \ \; \forall v\in K, 
$$
while for $v:=u_0$ the inequality \eqref{BoundedEstimate2} reduces to 
$$
B(u_0,\mu)-B(u_0,\lambda_0)-\psi(\lambda_0,\mu-\lambda_0)+\langle g,\mu-\lambda_0\rangle\leq 0, \ \;\forall \mu\in\Lambda,
$$
i.e., $[u_0,\lambda_0]$ solves $(S)$.

\item {\sc Case 2.} ${\bf (H_B^1),\ (H_\chi^2)}$ and ${\bf (H_\psi^2)}$ hold.

Define $T,U: E\times E \to\mathbb{R}$ by 
$$
T([w,\sigma],[u,\lambda]):=\chi(u,u-w)+\psi(\lambda,\lambda-\sigma)+B(u,\sigma)-B(w,\lambda),
$$
and
$$
U([w,\sigma],[u,\lambda]):=-\chi(w,w-u)-\psi(\sigma,\sigma-\lambda)+B(u,\sigma)-B(w,\lambda).
$$

Then $[w,\sigma]\mapsto T([w,\sigma],[u,\lambda])$ is concave, while   $[u,\lambda]\mapsto U([w,\sigma],[u,\lambda])$ is weakly lower semicontinuous,. Moreover,
$$
T([w,\sigma],[w,\sigma])=0, \ \; \forall [w,\sigma]\in E,
$$
and 
$$
T([w,\sigma],[u,\lambda])-U([w,\sigma],[u,\lambda])=\chi(u,u-w)+\chi(w,w-u)+\psi(\lambda,\lambda-\sigma)+\psi(\sigma,\sigma-\lambda)\geq 0,
$$
for all $[u,\lambda],[w,\sigma]\in E$. Consequently, we can apply Mosco's Alternative  for $E$ endowed with the weak topology and $a:=0$ to get the existence of $[u_0,\lambda_0]\in \mathcal{K}$ such that 
$$
U([w,\sigma],[u_0,\lambda_0])+\varphi([u_0,\lambda_0])-\varphi([w,\sigma])
\leq 0,\ \; \forall [w,\sigma]\in E.
$$
Thus,
\begin{equation}\label{BoundedSolEst2}
-\chi(w,w-u_0)-\psi(\sigma,\sigma-\lambda_0)+B(u_0,\sigma)-B(w,\lambda_0)+\langle f,w-u_0\rangle+\langle g,\sigma-\lambda_0\rangle\leq 0,
\end{equation}
for all $[w,\sigma]\in\mathcal{K}$.

Let $[v,\mu]\in\mathcal{K}$ and $t\in (0,1)$ be fixed.  Choosing $[w,\sigma]:=[u_0+t(v-u_0),\lambda_0]$ in \eqref{BoundedSolEst2} we get
$$
-\chi(u_0+t(v-u_0),t(v-u_0))+B(u_0,\lambda_0)-B(u_0+t(v-u_0),\lambda_0)+t\langle f,v-u_0\rangle\leq 0,
$$
which leads to
$$
t\langle f,v-u_0 \rangle\leq t\chi(u_0+t(v-u_0),v-u_0)-B(u_0,\lambda_0)+tB(v,\lambda_0)+(1-t)B(u_0,\lambda_0).
$$
Dividing by $t>0$, then letting $t\to 0_+$ we get the first inequality of $(S)$.  

For the second inequality it suffices to choose $[w,\sigma]:=[u_0,\lambda_0+t(\mu-\lambda_0)]$ in \eqref{BoundedSolEst2}.

\item {\sc Case 3.} ${\bf (H_B^1),\ (H_\chi^1)}$ and ${\bf (H_\psi^2)}$ hold.

Define $T, U: E\times E\to \mathbb{R}$ by 
$$
T([w,\sigma],[u,\lambda]):=B(u,\sigma)-B(w,\lambda)-\chi(u,w-u)+\psi(\lambda,\lambda-\sigma),
$$ 
and 
$$
U([w,\sigma],[u,\lambda]):=B(u,\sigma)-B(w,\lambda)-\chi(u,w-u)-\psi(\sigma,\sigma-\lambda),
$$
and apply Moscos's Alternative to get the existence of $[u_0,\lambda_0]\in\mathcal{K}$ such that 
\begin{equation}\label{BoundedEstimate4}
B(u_0,\sigma)-B(w,\lambda_0)-\chi(u_0,w-u_0)-\psi(\sigma,\sigma-\lambda_0)+\langle f,w-u_0\rangle+\langle g,\sigma-\lambda_0\rangle\leq 0,
\end{equation}
for all $[w,\sigma]\in\mathcal{K}.$

Let $[v,\mu]\in \mathcal{K}$ be fixed. Choosing  $[w,\sigma]:=[v,\lambda_0]$ in \eqref{BoundedEstimate4} we get the first inequality of $(S)$, while for $[w,\sigma]:=[u_0,\lambda_0+t(\mu-\lambda_0)]$, $t\in (0,1)$, we have 
\begin{eqnarray*}
0 &\geq& B(u_0,\lambda_0+t(\mu-\lambda_0))-B(u_0,\lambda_0)-\psi(\lambda_0+t(\mu-\lambda_0),t(\mu-\lambda_0))+\langle g,t(\mu-\lambda_0)\rangle\\
	&\geq& t\left[ B(u_0,\mu)-B(u_0,\lambda_0)-\psi(\lambda_0+t(\mu-\lambda_0),\mu-\lambda_0)+\langle g,\mu-\lambda_0\rangle\right]. 
\end{eqnarray*}
Dividing by $t>0$, then letting $t\to 0_+$ we get the second inequality of $(S)$.

\item {\sc Case 4.} ${\bf (H_B^1),\ (H_\chi^2)}$ and ${\bf (H_\psi^1)}$ hold.

Define $T, U: E\times E\to \mathbb{R}$ by 
$$
T([w,\sigma],[u,\lambda]):=B(u,\sigma)-B(w,\lambda)+\chi(u,u-w)-\psi(\lambda,\sigma-\lambda),
$$ 
and 
$$
U([w,\sigma],[u,\lambda]):=B(u,\sigma)-B(w,\lambda)-\chi(w,w-u)-\psi(\lambda,\sigma-\lambda),
$$
and follow the same steps as in the previous case.

\item {\sc Case 5.} ${\bf (H_B^2),\ (H_\chi^2)}$ and ${\bf (H_\psi^2)}$ hold.

Define $T, U: E\times E\to \mathbb{R}$ by 
$$
T([w,\sigma],[u,\lambda]):=2B(u,\sigma)-B(w,\sigma)-B(u,\lambda)+\chi(u,u-w)+\psi(\lambda,\lambda-\sigma),
$$ 
and 
$$
U([w,\sigma],[u,\lambda]):=2B(u,\sigma)-B(w,\sigma)-B(u,\lambda)-\chi(w,w-u)-\psi(\sigma,\sigma-\lambda),
$$
and apply Mosco's Alternative to get the existence of $[u_0,\lambda_0]\in\mathcal{K}$ such that 
\begin{equation}\label{BoundedEstimate5}
2B(u_0,\sigma)-B(w,\sigma)-B(u_0,\lambda_0)-\chi(w,w-u_0)-\psi(\sigma,\sigma-\lambda_0)+\langle f,w-u_0\rangle+\langle g,\mu-\lambda_0\rangle\leq 0, 
\end{equation}
for all $ [w,\sigma]\in \mathcal{K}$.

Let $[v,\mu]\in \mathcal{K}$ and recall that $H([w,\sigma]):=2B(u_0,\sigma)-B(w,\sigma)$ is concave, therefore for any $t\in(0,1)$ one has
$$
H\left(t[v,\lambda_0]+(1-t)[u_0,\lambda_0]\right)\geq t H([v,\lambda_0])+(1-t)H([u_0,\lambda_0]),
$$
and 
$$
H(t[u_0,\mu]+(1-t)[u_0,\lambda_0])\geq tH([u_0,\mu])+(1-t)H([u_0,\lambda_0]),
$$
i.e., 
\begin{equation}\label{Concavity1}
2B(u_0,\lambda_0)-B(u_0+t(v-u_0),\lambda_0)\geq (1+t)B(u_0,\lambda_0)-tB(v,\lambda_0),
\end{equation}
and 
\begin{equation}\label{Concavity2}
B(u_0,\lambda_0+t(\mu-\lambda_0))\geq tB(u_0,\mu)+(1-t)B(u_0,\lambda_0),
\end{equation}
respectively.

Taking $[w,\sigma]:=[u_0+t(v-u_0),\lambda_0]$ in \eqref{BoundedEstimate5} and keeping \eqref{Concavity1} in mind we get 
\begin{eqnarray}
\nonumber 0&\geq& 2B(u_0,\lambda_0)-B(u_0+t(v-u_0))-B(u_0,\lambda_0)+\langle f,t(v-u_0)\rangle \\
\nonumber &&-\chi(u_0+t(v-u_0),t(v-u_0))\\
\label{Ineq5.1} &\geq &t \left[ B(u_0,\lambda_0)-B(v,\lambda_0)-\chi(u_0+t(v-u_0),v-u_0)+\langle f,v-u_0\rangle \right].
\end{eqnarray}
On the other hand, kaking $[w,\sigma]:=[u_0,\lambda_0+t(\mu-\lambda_0)]$ in \eqref{BoundedEstimate5} and using 
\eqref{Concavity2} we get 
\begin{eqnarray}
\nonumber 0&\geq& B(u_0,\lambda_0+t(\mu-\lambda_0))-B(u_0,\lambda_0)-\psi(\lambda_0+t(\mu-\lambda_0),t(\mu-\lambda_0))+\langle g,t(\mu-\lambda_0)\rangle \\
\label{Ineq5.2} &\geq &t \left[ B(u_0,\mu)-B(u_0,\lambda_0)-\psi(\lambda_0+t(\mu-\lambda_0),\mu\lambda_0)+\langle f,v-u_0\rangle \right]
\end{eqnarray}
Dividing \eqref{Ineq5.1} and \eqref{Ineq5.2}, then letting $t\to 0_+$ we get the desired inequalities.

\item {\sc Case 6.} ${\bf (H_B^2),\ (H_\chi^1)}$ and ${\bf (H_\psi^2)}$ hold.

Define $T, U: E\times E\to \mathbb{R}$ by 
$$
T([w,\sigma],[u,\lambda]):=2B(u,\sigma)-B(w,\sigma)-B(u,\lambda)-\chi(u,w-u)+\psi(\lambda,\lambda-\sigma),
$$ 
and 
$$
U([w,\sigma],[u,\lambda]):=2B(u,\sigma)-B(w,\sigma)-B(u,\lambda)-\chi(u,w-u)-\psi(\sigma,\sigma-\lambda),
$$
and follow the same steps as in Case 4.

\item {\sc Case 7.} ${\bf (H_B^2),\ (H_\chi^2)}$ and ${\bf (H_\psi^1)}$ hold.

Define $T, U: E\times E\to \mathbb{R}$ by 
$$
T([w,\sigma],[u,\lambda]):=2B(u,\sigma)-B(w,\sigma)-B(u,\lambda)+\chi(u,u-w)-\psi(\lambda,\sigma-\lambda),
$$ 
and 
$$
U([w,\sigma],[u,\lambda]):=2B(u,\sigma)-B(w,\sigma)-B(u,\lambda)-\chi(w,w-u)-\psi(\lambda,\sigma-\lambda),
$$
and follow the same steps as in Case 4.
\end{description}
\qed
\end{proof}

If the sets $K$ and $\Lambda$ are unbounded, then we need to impose a coercivity condition in order to prove the existence of solutions.  Two such conditions are provided below.

\begin{description}
\item  ${\bf (C_1)}$ ${\displaystyle \frac{\chi(u,-u)+\psi(\lambda,-\lambda)}{\sqrt{\|u\|_X^2+\|\lambda\|_Y^2 }} \rightarrow -\infty}$ as $ \sqrt{\|u\|_X^2+\|\lambda\|_Y^2 }\rightarrow\infty$;

\medskip

\item ${\bf {\bf (C_2)}}$ There exist $m_\chi,m_\psi>0$ and $p,q\geq 1$ such that:
\begin{enumerate}[$(i)$]
\item $\chi(u,-u)\leq m_\chi \|u\|_X^p$ for all $u\in X$;

\smallskip

\item $\psi(\lambda,-\lambda)\leq m_\psi \|\lambda\|_Y^q$ for all $\lambda\in Y$;
\smallskip

\item ${\displaystyle \frac{B(0_X,\lambda)-B(u,0_Y)}{\sqrt{\|u\|_X^2+\|\lambda\|_Y^2 }^{\max\{p,q\}} }\to -\infty}$ as $\sqrt{\|u\|_X^2+\|\lambda\|_Y^2 }\to\infty$.
\end{enumerate} 
\end{description}

The main result of the paper is given by the following theorem. Note that there are twelve possible cases to choose from depending whether we impose ${\bf (C_1)}$ or ${\bf (C_2)}$, ${\bf (H_B^1)}$ or ${\bf (H_B^2)}$ and so on.

\begin{theorem}\label{Unbounded1} Suppose $X$ and $Y$  are real reflexive Banach spaces and let $0_X\ni K\subseteq X$, $0_Y\ni \Lambda \subseteq Y$  be unbounded, closed and convex subsets. Assume in addition that either ${\bf (C_1)}$, ${\bf (H_B^1)}$, ${\bf (H_\chi^j)}$ and ${\bf (H_\psi^k)}$  or ${\bf (C_2)}$, ${\bf (H_B^i)}$, ${\bf (H_\chi^j)}$ and ${\bf (H_\psi^k)}$ hold with $i,j,k\in\{1,2\}$.
Then the inequality system $(S)$ possesses at least one solution.
\end{theorem}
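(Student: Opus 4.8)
The plan is to reduce the unbounded problem to the already-settled bounded one (Lemma \ref{BoundedCase2}) by truncation, using the coercivity conditions only to produce a single \emph{uniform} a priori bound; once that bound is in hand, no weak-limit passage is needed at all. Concretely, I would first extract a radius $R$, depending only on the data, that controls every solution of $(S)$ on any truncation, and then solve the problem on one ball of radius $n_0>R$ and argue that the resulting solution already solves the full system.

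\textbf{Step 1 (uniform a priori bound).} For a solution $[u,\lambda]$ of $(S)$ posed on \emph{any} truncated constraint set containing the origins, I would test the first inequality with $v:=0_X$ and the second with $\mu:=0_Y$ (both admissible since $0_X\in K$, $0_Y\in\Lambda$). Adding the two resulting inequalities, the coupling values $B(u,\lambda)$ cancel, leaving the key estimate
\begin{equation}\label{myAP}
\chi(u,-u)+\psi(\lambda,-\lambda)\geq B(u,0_Y)-B(0_X,\lambda)-\langle f,u\rangle-\langle g,\lambda\rangle .
\end{equation}
Writing $r:=\sqrt{\|u\|_X^2+\|\lambda\|_Y^2}$, I would argue by contradiction that $r$ stays bounded. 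Under ${\bf (C_1)}$ (hence ${\bf (H_B^1)}$) the convexity of $u\mapsto B(u,0_Y)$ and concavity of $\lambda\mapsto B(0_X,\lambda)$ supply continuous affine minorants, so the right-hand side of \eqref{myAP} is bounded below by $-\mathrm{const}\cdot r-\mathrm{const}$; dividing by $r$ then contradicts ${\bf (C_1)}$ once $r$ is large. Under ${\bf (C_2)}$ I would instead insert $\chi(u,-u)\le m_\chi\|u\|_X^p$ and $\psi(\lambda,-\lambda)\le m_\psi\|\lambda\|_Y^q$ into \eqref{myAP}, divide by $r^{\max\{p,q\}}$, and use $\|u\|_X^p,\|\lambda\|_Y^q\le r^{\max\{p,q\}}$ together with $\max\{p,q\}\ge 1$ to absorb the linear terms, contradicting ${\bf (C_2)}(iii)$. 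Either way one obtains $R>0$ with $\|u\|_X,\|\lambda\|_Y\le R$ for every such solution.

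\textbf{Steps 2--3 (one bounded problem, then removal of the artificial constraint).} Fix $n_0>R$ and set $K_{n_0}:=K\cap\{v:\|v\|_X\le n_0\}$ and $\Lambda_{n_0}:=\Lambda\cap\{\mu:\|\mu\|_Y\le n_0\}$, which are nonempty, bounded, closed and convex, so Lemma \ref{BoundedCase2} yields a solution $[\hat u,\hat\lambda]$ of $(S)$ on $K_{n_0}\times\Lambda_{n_0}$; by Step 1, $\|\hat u\|_X,\|\hat\lambda\|_Y\le R<n_0$, so $[\hat u,\hat\lambda]$ lies strictly inside the imposed ball constraints. To upgrade this to a solution on all of $K\times\Lambda$, I would fix $v\in K$ and note that $w_t:=\hat u+t(v-\hat u)\in K$ satisfies $\|w_t\|_X<n_0$ for $t\in(0,1)$ small, hence $w_t\in K_{n_0}$ is admissible in the first inequality. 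Using that $u\mapsto B(u,\hat\lambda)$ is convex (this holds under ${\bf (H_B^1)}$ and also under ${\bf (H_B^2)}$, since freezing the $\lambda$-coordinate in the concave map of ${\bf (H_B^2)}(ii)$ forces convexity in $u$, while taking the parameter equal to the running point forces $\lambda\mapsto B(u,\lambda)$ concave), and that $\chi(\hat u,t(v-\hat u))\le t\,\chi(\hat u,v-\hat u)$ (from convexity and $\chi(\hat u,0_X)=0$ in the type-1 case, or from positive homogeneity in the type-2 case), the tested inequality reduces, after dividing by $t$, to exactly the first inequality of $(S)$ for the given $v$, with no $t$ remaining. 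The symmetric argument on the second inequality, using the concavity of $\mu\mapsto B(\hat u,\mu)$ and the corresponding property of $\psi$, yields the second inequality for every $\mu\in\Lambda$, so $[\hat u,\hat\lambda]$ solves $(S)$.

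\textbf{Main obstacle.} The delicate point is Step 1: one must obtain a bound that is \emph{uniform in the truncation level} while controlling the fully nonlinear coupling $B$. The cancellation of $B(u,\lambda)$ produced by testing both inequalities at the origin is exactly what makes this feasible; after that, the two coercivity conditions must be treated separately, via affine minorants of the convex/concave parts of $B$ for ${\bf (C_1)}$ and via the prescribed growth of $\chi(u,-u)$, $\psi(\lambda,-\lambda)$ and $B(0_X,\lambda)-B(u,0_Y)$ for ${\bf (C_2)}$. By contrast, Steps 2--3 are routine once the strict interiority $\|\hat u\|_X,\|\hat\lambda\|_Y<n_0$ is secured, and notably they require neither weak compactness nor any limit passage.
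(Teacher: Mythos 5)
Your proposal is correct and follows essentially the same route as the paper: truncate to $K_R\times\Lambda_R$, invoke the bounded-case lemma, obtain the a priori bound by testing both inequalities at the origins so that $B(u,\lambda)$ cancels (using affine minorants of the convex/concave slices of $B$ under ${\bf (C_1)}$ and the growth bounds under ${\bf (C_2)}$), and then pass from the strictly interior truncated solution to the full system by moving along segments and dividing by $t$. The only cosmetic difference is that you establish a bound uniform over all truncation levels before fixing one, whereas the paper argues by contradiction that some single $R_0$ already yields an interior solution; the substance is identical.
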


\begin{proof}
For each $R>0$ consider $(S_R)$ to be the system of inequalities obtained from $(S)$ but with $K_R:=K\cap \bar{B}_X(0,R)$ and $\Lambda_R:=\Lambda\cap \bar{B}_Y(0,R)$ instead of $K$ and $\Lambda$, respectively.  Then there exists at least one solution $[u_R,\lambda_R]\in K_R\times \Lambda_R$ for $(S_R)$. 

We claim that, regardless whether ${\bf (C_1)}$ or ${\bf (C_2)}$ holds, there exists $R_0>0$ such that the corresponding solution $[u_{R_0},\lambda_{R_0}]$ satisfies $\max\{\|u_{R_0}\|_X,\|\lambda_{R_0}\|_{Y}\}<R_0$. 

Arguing by contradiction, assume that for any $R>0$ and any solution $[u_R,\lambda_R]$ of $(S_R)$ one has $\max\{ \|u_R\|_X,\|\lambda_R\|_Y\}=R$. Choosing $v:=0_X$ and $\mu:=0_Y$ in $(S_R)$, then adding the two inequalities  we get 
\begin{equation}\label{CoercivityEst1}
B(0_X, \lambda_{R})-B(u_{R},0_Y)+\chi(u_{R_0},-u_{R})+\psi( \lambda_{R},- \lambda_{R})\geq -\langle f,u_{R}\rangle -\langle g, \lambda_{R} \rangle.
\end{equation}

\begin{description}
\item {\sc Case 1.}  ${\bf {\bf (C_1)}}$, ${\bf (H_B^1)}$, ${\bf (H_\chi^j)}$ and ${\bf (H_\psi^k)}$ hold.

Then
$$
\chi(u_{R},-u_{R})+\psi( \lambda_{R},- \lambda_{R})\geq -\|f\|_{X^\ast}\|u_{R}\|_{X}-\|g\|_{Y^\ast}\|\lambda_R\|_{Y}+ B(u_R,0_Y)-B(0_X,\lambda_R).
$$
Since every convex lower continuous functional is bounded below by an affine function (see, e.g., Brezis \cite[Proposition 1.10]{Brezis2011}), there exist $f_1\in X^\ast$, $g_1\in Y^\ast$ and $c_1,c_2\in\mathbb{R}$ such that 
$$
B(u_R,0_Y)\geq \langle f_1,u_R\rangle+c_1\geq -\|f_1\|_{X^\ast}\|u_R\|_X+c_1, 
$$
and 
$$
-B(0_X,\lambda_R)\geq  \langle g_1,\lambda_R\rangle+c_2\geq -\|g_1\|_{Y^\ast}\|\lambda_R\|_Y+c_2.
$$
Thus,
$$
\frac{\chi(u_R,-u_R)+\psi(\lambda_R,-\lambda_R)}{\sqrt{ \|u_R\|_X^2+\|\lambda_R\|_Y^2}}\geq \frac{-c_3\|u_R\|_X-c_4\|\lambda_R\|_Y+c_1+c_2}{\sqrt{ \|u_R\|_X^2+\|\lambda_R\|_Y^2}},
$$
where $c_3:=\|f\|_{X^\ast}+\|f_1\|_{X^\ast}>0$ and $c_4:=\|g\|_{Y^\ast}+\|g_1\|_{Y^\ast}>0$.
Letting $R\to\infty$ we reach a contradiction as the left-hand side term tends to $-\infty$, while the right-hand side term is bounded (keep in mind we assumed $\max\{\|u_R\|_X,\|\lambda_R\|_Y\}=R$).

\medskip

\item {\sc Case 2.} ${\bf {\bf (C_2)}}$, ${\bf (H_B^i)}$,  ${\bf (H_\chi^j)}$ and ${\bf (H_\psi^k)}$ hold.

Using  \eqref{CoercivityEst1} we get
$$
\frac{B(0_X,\lambda_R)-B(u_R,0_Y)}{\sqrt{\|u_R\|_X^2+\| \lambda_R \|_Y^2 }^{\max\{p,q\}}}\geq 
-\frac{\|f\|_{X^\ast} \|u_R\|_X+m_\chi\|u_R\|_X^p+\|g\|_{Y^\ast} \|\lambda_R\|_Y+m_\psi\|\lambda_R\|_X^q}{ \sqrt{\|u\|_X^2+\|\lambda\|_Y^2 }^{\max\{p,q\}} },
$$
 and a contradiction is reached by letting  $R\to\infty$. 
\end{description}

\medskip

Now, let $[u_{R_0},\lambda_{R_0}]$ be the solution of $(S_{R_0})$ such that $\|u_{R_0}\|_{X}<R_0$ and $\|\lambda_{R_0}\|_{Y}<R_0$. Then $[u_{R_0},\lambda_{R_0}]$ also solves $(S)$. In order to prove this let $[v,\mu]\in K\times\Lambda$ be fixed. Then the number 
$$
t:=\left\{
\begin{array}{ll}
\frac{1}{2}, & \mbox{ if } v=u_{R_0} \mbox{ and }\mu=\lambda_{R_0}\\
\min\left\{\frac{1}{2},\frac{R_0-\|\lambda_{R_0}\|_{Y}}{\|\mu-\lambda_{R_0}\|_Y}\right\}, & \mbox{ if }v=u_{R_0}\mbox{ and }\mu\neq\lambda_{R_0} \\
\min\left\{\frac{1}{2},\frac{R_0-\|u_{R_0}\|_{X}}{\|v -u_{R_0}\|_X}\right\}, & \mbox{ if } v\neq u_{R_0}\mbox{ and }\mu=\lambda_{R_0}\\
\min\left\{\frac{1}{2},\frac{R_0-\|u_{R_0}\|_X}{\|v-u_{R_0}\|_X},\frac{R_0-\|\lambda_{R_0}\|_{Y}}{\|\mu-\lambda_{R_0}\|_Y}\right\}, &\mbox{ if } v\neq u_{R_0}\mbox{ and }\mu\neq\lambda_{R_0}\\
\end{array}
\right.
$$
belongs to $(0,1)$ and $[v_t,\mu_t]:=\left[u_{R_0}+t(v-u_{R_0}),\lambda_{R_0}+t(\mu-\lambda_{R_0})\right]\in K_{R_0}\times \Lambda_{R_0}$. 

\medskip

If ${\bf (H_B^1)}$ holds, then the convexity of the functionals $u\mapsto B(u,\lambda)$ and $\lambda\mapsto -B(u,\lambda)$ ensures that 
\begin{equation}\label{ConvexityEst1}
B(v_t,\lambda_{R_0})-B(u_{R_0},\lambda_{R_0})\leq t\left[ B(v,\lambda_{R_0})-B(u_{R_0},\lambda_{R_0}) \right],
\end{equation}
and 
\begin{equation}\label{ConvexityEst2}
B(u_{R_0},\lambda_{R_0})-B(u_{R_0},\mu_t)\leq t\left[ B(u_{R_0},\lambda_{R_0})-B(u_{R_0},\mu) \right],
\end{equation}
while if ${\bf (H_B^2)}$ holds, \eqref{ConvexityEst1}-\eqref{ConvexityEst2} follow directly from \eqref{Concavity1} and \eqref{Concavity2}.

We only consider the case when ${\bf (H_\chi^1)}$, ${\bf (H_\psi^2)}$ are fulfilled, the others being similar. Since $[u_{R_0},\lambda_{R_0}]$ solves $(S_{R_0})$ and \eqref{ConvexityEst1}-\eqref{ConvexityEst2} are holding we have 
\begin{eqnarray*}
t\langle f,v-u_{R_0} \rangle &=& \langle f,v_t-u_{R_0}\rangle\leq  B(v_t,\lambda_{R_0})-B(u_{R_0},\lambda_{R_0})+\chi(u_{R_0},t(v-u_{R_0}))\\
	& \leq & t\left[ B(v,\lambda_{R_0})-B(u_{R_0},\lambda_{R_0})+\chi(u_{R_0},v-u_{R_0})\right]+(1-t)\underbrace{\chi(u_{R_0},0_X)}_{=0},
\end{eqnarray*}
and
\begin{eqnarray*}
t\langle g,\mu-\lambda_{R_0} \rangle &=& \langle g,\mu_t-\lambda_{R_0}\rangle\leq  B(u_{R_0},\lambda_{R_0})-B(u_{R_0},\mu_t)+\psi(\lambda_{R_0},t(\mu-\lambda_{R_0}))\\
	& \leq & t\left[B(u_{R_0},\lambda_{R_0})-B(u_{R_0},\mu)+\psi(u_{R_0},v-u_{R_0})\right].
\end{eqnarray*}
Dividing both inequalities by $t>0$ we infer that $[u_{R_0},\lambda_{R_0}]$ indeed solves $(S)$ as $[v,\mu]\in\mathcal{K}$ was chosen arbitrarily.
\qed
\end{proof}

\section{Applications}

\subsection{Partial differential inclusions driven by the $\Phi$-Laplacian}

Let $\Omega$ be a bounded connected  open subset of $\mathbb{R}^N$, with Lipschitz boundary $\Gamma$ partitioned into three measurable parts $\Gamma_1, \Gamma_2$ and $\Gamma_3$ such that ${\rm meas}(\Gamma_i)>0$, $i\in\{1,2,3\}$. We consider the following boundary problem: 
$$
(P):\ \left\{
\begin{array}{ll}
\Delta_\Phi u\in \partial_2 h(x,u(x)),& \mbox{ in } \Omega,\\
u=0,& \mbox{ on }\Gamma_1,\\
\frac{\partial u}{\partial n_\Phi}=f_2,& \mbox{ on }\Gamma_2,\\
\left| \frac{\partial u}{\partial n_\Phi}\right|\leq g, \; \frac{\partial u}{\partial n_\Phi}=-g\frac{u}{|u|}\mbox{ if } u\neq 0, & \mbox{ on }\Gamma_3,
\end{array}
\right.
$$
where $\Delta_\Phi u:={\rm div}(\frac{\phi(|\nabla u|)}{|\nabla u|}\nabla u)$ is the $\Phi$-Laplace operator, with $\Phi$ being the $N$-function (i.e., $\Phi$ is convex and even, $\Phi(t)=0\Leftrightarrow t=0$, $\lim_{t\to 0} \frac{\Phi(t)}{t}=0$ and $\lim_{t\to\infty}\frac{\Phi(t)}{t}=\infty$) defined by
$$
\Phi(t):=\int_0^t \phi(s)\; ds.
$$ 
Here and hereafter $\partial_2 h(x,t)$ stands for the subdifferential (in the sense of Convex Analysis) of $t\mapsto h(x,t)$ and  $\frac{\partial u}{\partial n_\Phi}:=\frac{\phi(|\nabla u|)}{|\nabla u|}\nabla u \cdot n$, with $n$ being the unit outer normal vector to $\Gamma$.

In the sequel we always assume that   
\begin{enumerate}[$(\mathcal{H}_0)$] 
\item $\phi:\mathbb{R}\to\mathbb{R}$ is continuous, odd, strictly increasing and onto such that 
 $$1<\phi^-\leq \phi^+<\infty,$$
where $\phi^-:=\inf_{t>0} \frac{t\phi(t)}{\Phi(t)}$ and $\phi^+:=\sup_{t>0} \frac{t\phi(t)}{\Phi(t)}$;
\end{enumerate}

Note that, if $\phi$ satisfies $(\mathcal{H}_0)$, then $\phi^{-1}$ also satisfies $(\mathcal{H}_0)$ and the following relations hold (see, e.g., \cite[Lemma C.6]{C-P-S-T}) 
$$
\frac{1}{(\phi^{-1})^-}+\frac{1}{\phi^+}=1=\frac{1}{(\phi^{-1})^+}+\frac{1}{\phi^+}.
$$
We point out the fact that if $(\mathcal{H}_0)$ holds, then $\Phi$ satisfies the $\Delta_2$-condition for large numbers, i.e., 
$$
\Phi(2t)\leq k\Phi(t), \ \forall t\geq t_0,
$$
for some positive constants $k$ and $t_0$.

Due to the presence of the $\Phi$-Laplacian the suitable function space to seek weak solutions of Problem $(P)$ is the {\it Orlicz-Sobolev} space $W^{1,\Phi}(\Omega)$. We recall below the definition and some basic properties of the Orlicz and Orlicz-Sobolev spaces that will be used to derive a variational formulation for our problem, then to prove the existence of at least one weak solution. For more details we refer to \cite{Adams,C-P-S-T,GH-L-M-S,Kras-Rut}.

The {\it Orlicz} space $L^\Phi(\Omega)$ is defined by 
$$
L^\Phi(\Omega):=\left\{ u:\Omega\to \mathbb{R} \mbox{ measurable}:  \ \int_\Omega \Phi(|u|)\; dx<\infty\right\},
$$
and endowed with the {\it Luxemburg} norm 
$$
|u|_\Phi:=\inf\left\{ k>0:\ \int_\Omega \Phi\left(\frac{|u|}{k}\right)dx\leq 1 \right\}
$$
becomes a separable and reflexive Banach space. 

The {\it Orlicz-Sobolev} space $W^{1,\Phi}(\Omega)$ is defined by 
$$
W^{1,\Phi}(\Omega):=\left\{ u\in L^\Phi(\Omega):\ |\nabla u|\in L^\Phi(\Omega) \right\}
$$
and it is endowed with the norm 
$$
\|u\|_{1,\Phi}:=|u|_\Phi+|\nabla u|_\Phi.
$$
Using the $\Delta_2$-condition we can identify $(L^{\Phi}(\Omega))^\ast$ with $L^{\Phi^\ast}(\Omega)$, where $\Phi^\ast$ is the {\it complementary} function of $\Phi$, i.e., 
$$
\Phi^\ast(s):=\sup_{t\geq 0}\{st-\Phi(t)\}, \ s\geq 0.
$$
Hypothesis $(\mathcal{H}_0)$ ensures that $\Phi^\ast$ is also an $N$-function and satisfies the $\Delta_2$-condition for large numbers as
$$
\Phi^\ast(s)=\int_0^s \phi^{-1}(t)\; dt.
$$
Moreover, the following  H\" older-type inequality 
$$
\int_\Omega uv \; dx\leq 2|u|_\Phi|v|_{\Phi^\ast}, 
$$ 
holds for any $u\in L^\Phi(\Omega), v\in L^{\Phi^\ast}(\Omega)$.

For   two $N$-functions $\Phi,\Psi$  if there exist $k,t_0>0$ such that 
$$
\Phi(t)\leq \Psi(kt), \ \forall t\geq t_0,
$$
then we say that $\Psi$ {\it dominates $\Phi$ near infinity}  and write $\Psi \succ \Phi$. Note that if $\Psi\succ \Phi$, then the embedding $L^\Psi(\Omega)\hookrightarrow L^\Phi(\Omega)$ is continuous. Two $N$-functions are called {\it equivalent near infinity} if they dominate each other near infinity.  We say that $\Psi$ {\it grows essentially faster than $\Phi$ near infinity}, and we write $\Psi\succ\succ\Phi$, if $\lim_{t\to\infty}\frac{\Phi(t)}{\Psi(kt)}=0$, for all $k>0$. Note that if  $\Psi\succ\succ\Phi$, then $\Psi\succ\Phi$.
Henceforth we always assume that 
$$
\int_0^1 \frac{\Phi^{-1}(s)}{s^{\frac{N+1}{N}}}\; ds<\infty,
$$
otherwise we replace $\Phi$ with an equivalent $N$-function near infinity. If in addition 
\begin{equation}\label{DivInt}
\int_1^\infty \frac{\Phi^{-1}(s)}{s^{\frac{N+1}{N}}}\; ds=\infty,
\end{equation}
then we define the {\it Sobolev conjugate} function of $\Phi$, denoted $\Phi_\ast$, to be the inverse of the mapping  
$$
t\mapsto \int_0^t \frac{\Phi^{-1}(s)}{s^{\frac{N+1}{N}}}\; ds.
$$ 
This function plays the same role for Orlicz-Sobolev spaces as the critical exponent $p^\ast$ in the case of classical Sobolev spaces. More precisely, if \eqref{DivInt} holds and $\Phi_\ast\succ\succ\Psi$, then the embedding $W^{1,\Phi}(\Omega)\hookrightarrow L^\Psi(\Omega)$  is compact and the embedding $W^{1,\Phi}(\Omega)\hookrightarrow L^{\Phi_\ast}(\Omega)$ is continuous. If \eqref{DivInt} is not satisfied, then the embedding $W^{1,\Phi}(\Omega)\hookrightarrow L^\Psi(\Omega)$ is compact for any $N$-function $\Psi$. It is well-known that $\Phi_\ast\succ\succ \Phi$, whenever \eqref{DivInt} holds.

Since $\Phi$ and $\Phi^\ast$ satisfy the $\Delta_2$-condition for large numbers holds, it follows that $C^\infty(\bar{\Omega})$ is dense in $W^{1,\Phi}(\Omega)$ and the the {\it trace  operator} $\gamma: W^{1,\Phi}(\Omega)\to L^\Phi(\Gamma)$, defined by  $\gamma u:=u|_{\Gamma}$ for all $u\in C^\infty(\bar{\Omega})$ is continuous. By \cite[Theorem 3.8 and Corollary 3.3]{Don-Tru}, if \eqref{DivInt}  holds and $ (\Phi_\ast)^{(N-1)/N}\succ \succ\Phi$, then the trace operator is also compact, i.e., the embedding $W^{1,\Phi}(\Omega) \hookrightarrow L^\Phi(\Gamma)$ is compact.

Next, we derive a variational formulation via Lagrange multipliers for problem $(P)$ and show the existence of at least one weak solution provided the following conditions are fulfilled.

\begin{enumerate}[$(\mathcal{H}_1)$]
\item $h:\Omega \times \mathbb{R}\to\mathbb{R} $ is Carath\' eodory function such that 

\begin{enumerate}[$(i)$]

\item $t\mapsto h(x,t)$ is convex for a.e. $x\in\Omega$;

\item $h(x,u(x))\in L^1(\Omega)$ for all $u\in W^{1,\Phi}(\Omega)$;

\end{enumerate}

\item  $f_2\in L^{\Phi^\ast}(\Gamma_2)$, $g\in L^\infty(\Gamma_3)$, $g\geq 0$ a.e. on $\Gamma_3$.

\end{enumerate}

Assume that $u$ is a sufficiently smooth solution of problem $(P)$.  Multiplying the first line of $(P)$ by $v-u$, then integrating over $\Omega$ we get 
\begin{equation}\label{VarForm1}
-\int_\Omega \frac{\phi(|\nabla u|)}{|\nabla u|}\nabla u\cdot \nabla(v-u) dx+\int_{\Gamma} \frac{\partial u}{\partial n_\Phi}(v-u)\; d\sigma=\int_{\Omega} \xi(v-u)\; dx;
\end{equation} 
for some $\xi(x)\in \partial_2 h(x,u(x))$. The definition of the convex subdifferential implies that 
\begin{equation}\label{VarForm2}
\int_\Omega \xi (v-u)\; dx\leq \int_\Omega \left[h(x,v(x))-h(x,u(x))\right]dx.
\end{equation}
 In order to deal with the first boundary condition we consider the following constraint set of {\it admissible displacements}
$$
X:=\left\{u\in W^{1,\Phi}(\Omega):\ \gamma u=0 \mbox{ on }\Gamma_1  \right\},
$$
Here and hereafter we simply write $u$ instead of $\gamma u$ to denote the trace of $u$ on $\Gamma$. It is readily seen that $X$ is a closed subspace of $W^{1,\Phi}(\Omega)$.  Here and hereafter we endow $X$ with the norm 
$$
\|u\|:=|\nabla u|_\Phi,
$$
which is equivalent to the norm $\|\cdot \|_{1,\Phi}$ inherited  from $W^{1,\Phi}(\Omega)$. This is a simple consequence of the following theorem which is probably known, but we provide the proof for the sake of completeness.

\begin{theorem}\label{EquivNorms}
Let $\Omega\subset \mathbb{R}^N$ be a bounded domain with Lipschitz boundary $\Gamma$ and let $\Gamma_1\subset \Gamma$ be such that ${\rm meas}(\Gamma_1)>0$.  If \eqref{DivInt} holds, then assume in addition that  $(\Phi_\ast)^{\frac{N-1}{N}}\succ\succ \Phi$. Then 
$$
\|u\|:=|\nabla u|_\Phi+\int_{\Gamma_1} |u|\; d\sigma
$$ 
is a norm on $W^{1,\Phi}(\Omega)$ and it is equivalent to the usual norm $\|u\|_{1,\Phi}:=|\nabla u|_{\Phi}+|u|_\Phi$.
\end{theorem}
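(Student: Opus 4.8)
The plan is to prove the two assertions separately: first that $\|\cdot\|$ satisfies the norm axioms, and then that it is equivalent to $\|\cdot\|_{1,\Phi}$. The only nontrivial axiom is positive definiteness. First I would observe that $\|u\|=0$ forces $|\nabla u|_\Phi=0$, hence $\nabla u=0$ a.e. in $\Omega$; since $\Omega$ is a domain (open and connected) this means $u$ is a.e. equal to a constant $c$. The remaining condition $\int_{\Gamma_1}|u|\,d\sigma=0$ then reads $|c|\,{\rm meas}(\Gamma_1)=0$, and because ${\rm meas}(\Gamma_1)>0$ we conclude $c=0$, i.e. $u=0$. Homogeneity and the triangle inequality are immediate from the corresponding properties of the Luxemburg norm $|\cdot|_\Phi$ and of the integral $\int_{\Gamma_1}|\cdot|\,d\sigma$.

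For the equivalence it suffices to produce constants $c_1,c_2>0$ with $c_1\|u\|_{1,\Phi}\le \|u\|\le c_2\|u\|_{1,\Phi}$. The upper bound is the routine direction: since $|\nabla u|_\Phi\le \|u\|_{1,\Phi}$ trivially, I only need $\int_{\Gamma_1}|u|\,d\sigma\le C\|u\|_{1,\Phi}$. This follows by combining the continuity of the trace operator $\gamma:W^{1,\Phi}(\Omega)\to L^\Phi(\Gamma)$ with the H\"older-type inequality on the finite-measure set $\Gamma_1$, which bounds the $L^1(\Gamma_1)$-norm by a constant multiple of the $L^\Phi(\Gamma)$-norm (the constant function $1$ lying in $L^{\Phi^\ast}(\Gamma_1)$).

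The substance of the theorem is the lower bound, which amounts to a Poincar\'e-type inequality $|u|_\Phi\le C\bigl(|\nabla u|_\Phi+\int_{\Gamma_1}|u|\,d\sigma\bigr)$; once this is available, adding $|\nabla u|_\Phi$ to both sides yields $\|u\|_{1,\Phi}\le C'\|u\|$. I would argue by contradiction and compactness: if the inequality failed, there would exist $u_n\in W^{1,\Phi}(\Omega)$ normalized by $|u_n|_\Phi=1$ with $|\nabla u_n|_\Phi+\int_{\Gamma_1}|u_n|\,d\sigma\to 0$. Then $\{u_n\}$ is bounded in $W^{1,\Phi}(\Omega)$; invoking reflexivity (guaranteed by $(\mathcal{H}_0)$, which makes $\Phi$ and $\Phi^\ast$ satisfy the $\Delta_2$-condition) I extract a subsequence with $u_n\rightharpoonup u$ in $W^{1,\Phi}(\Omega)$. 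The compact embedding $W^{1,\Phi}(\Omega)\hookrightarrow L^\Phi(\Omega)$ gives $u_n\to u$ strongly in $L^\Phi(\Omega)$, whence $|u|_\Phi=1$; meanwhile weak lower semicontinuity of $|\nabla\cdot|_\Phi$ forces $|\nabla u|_\Phi\le\liminf_n|\nabla u_n|_\Phi=0$, so $\nabla u=0$ and, by connectedness, $u$ is a constant.

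The decisive step, and the place where the extra hypothesis ``$(\Phi_\ast)^{(N-1)/N}\succ\succ\Phi$ whenever \eqref{DivInt} holds'' is used, is the compactness of the trace operator $W^{1,\Phi}(\Omega)\hookrightarrow L^\Phi(\Gamma)$. Under that hypothesis $u_n\to u$ strongly in $L^\Phi(\Gamma)$, hence in $L^1(\Gamma_1)$, so $\int_{\Gamma_1}|u|\,d\sigma=\lim_n\int_{\Gamma_1}|u_n|\,d\sigma=0$. Being constant with vanishing boundary integral on a set of positive measure, $u$ must be identically zero, contradicting $|u|_\Phi=1$. This contradiction proves the Poincar\'e inequality and completes the equivalence. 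I expect this compact-trace step to be the main obstacle, since it is exactly what can fail in the Orlicz--Sobolev setting and is the reason for the additional growth condition on $\Phi$; the interior compact embedding and reflexivity are comparatively standard consequences of $(\mathcal{H}_0)$.
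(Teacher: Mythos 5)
Your proposal is correct and follows essentially the same route as the paper: definiteness via $\nabla u=0$ on a connected domain plus ${\rm meas}(\Gamma_1)>0$, the easy upper bound via trace continuity and the H\"older-type inequality, and the lower bound by a normalization--contradiction--compactness argument using reflexivity, the compact embedding into $L^\Phi(\Omega)$, and the compact trace into $L^\Phi(\Gamma_1)$. The only (immaterial) differences are that you normalize by $|u_n|_\Phi=1$ and invoke weak lower semicontinuity of $|\nabla\cdot|_\Phi$ to get $\nabla u=0$, whereas the paper normalizes by $\|u_n\|_{1,\Phi}=1$ and identifies the zero gradient of the limit through a test-function argument.
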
 

\begin{proof}
One can easily check that $\|\cdot\|$ is a seminorm on $W^{1,\Phi}(\Omega)$, i.e., it is positive homogeneous and subadditive.  Assume now that $\|u\|=0$.  Then 
$$
|\nabla u|_{\Phi}=0 \mbox{ and }\int_{\Gamma_1} |u|\; d\sigma=0.
$$
The fact that $|\nabla u|_\Phi=0$ implies that $u$ is constant in $\Omega$, say $u(x)=k$, while from the second equality we have 
$$
0=\int_{\Gamma_1} |k|\; d\sigma=|k|{\rm meas}(\Gamma_1),
$$
which forces $k=0$, therefore $\|u\|=0\Rightarrow u=0$. 

We show next there exists $m>0$ such that 
\begin{equation}\label{FirstIneqEquivNorms}
m\|u\|_{1,\Phi}\leq \|u\|, \ \forall u\in W^{1,\Phi}(\Omega).
\end{equation}
Arguing by contradiction, assume that for each $n\geq 1$ there exists $u_n\in W^{1,\Phi}(\Omega)$ such that 
$$
\frac{1}{n}\|u_n\|_{1,\Phi}>\|u_n\|.
$$
In particular, $\|u_n\|_{1,\Phi}\neq 0$, hence the sequence  $v_n:=\frac{u_n}{\|u_n\|_{1,\Phi}}$ satisfies 
\begin{equation}\label{SecondIneqEquivNorms}
\|v_n\|_{1,\Phi}=1 \mbox{ and }\|v_n\|<\frac{1}{n}, \ \forall n\geq 1.
\end{equation}
Then 
$$
|\nabla v_n|_{\Phi}\to 0 \mbox{ and }\int_{\Gamma_1} |v_n|\; d\sigma\rightarrow 0, \mbox{ as }n\to\infty.
$$
On the other hand, $\{v_n\}$ is a bounded sequence in the reflexive Banach space $W^{1,\Phi}(\Omega)$, therefore there exists a subsequence (for simplicity we do not relabel) and $v\in W^{1,\Phi}(\Omega)$ such that 
$$
v_n\rightharpoonup v \mbox{ in }W^{1,\Phi}(\Omega).  
$$
Since $W^{1,\Phi}(\Omega)$ is compactly embedded into $L^{\Phi}(\Omega)$ and $L^{\Phi}(\Gamma_1)$ it follows that 
$$
v_n\to v \mbox{ in }L^{\Phi}(\Omega) \mbox{ and }v_n\to v \mbox{ in }L^{\Phi}(\Gamma_1).
$$
Let $w\in C_0^\infty(\Omega)$ be a test function and $i\in\{1,\ldots,N\}$ a fixed index. Then 
$$
\int_\Omega v_n(x)\frac{\partial w}{\partial x_i}(x)\; dx=-\int_\Omega \frac{\partial v_n}{\partial x_i}(x)w(x)\;dx\to 0  \mbox{ as }n\to \infty
$$
and 
$$
\int_\Omega v_n(x)\frac{\partial w}{\partial x_i}(x)\; dx\to \int_\Omega v(x)\frac{\partial w}{\partial x_i}(x)\; dx, \mbox{ as }n\to \infty,
$$
which leads to 
$$
0=\int_{\Omega} v(x)\frac{\partial w}{\partial x_i}(x)\; dx=-\int_{\Omega} \frac{\partial v}{\partial x_i}(x)w(x)\; dx.
$$
Since $w$ and $i$ were arbitrarily fixed last relation shows that $|\nabla v|=0$ a.e. in $\Omega$. In particular, 
$v_n$ converges strongly to $v$ in $W^{1,\Phi}(\Omega)$ as
$$
\|v_n-v\|_{1,\Phi}=|\nabla v_n|_{\Phi}+|v_n-v|_{\Phi} \to 0, \mbox{ as }n\to \infty.
$$
One also has
$$
\left| \int_{\Gamma_1} |v_n|\; d\sigma-\int_{\Gamma_1}|v|\; d\sigma\right|\leq \int_{\Gamma_1} |v_n-v|\; d\sigma\leq 2{\rm meas}(\Gamma_1) |v_n-v|_{\Phi}\to 0,
$$
which shows that ${\displaystyle \int_{\Gamma_1}|v|\; d\sigma=0}$. It follows that $v=0$ and this is a contradiction since $v_n\to v$ in $W^{1,\Phi}(\Omega)$ and $\|v_n\|_{1,\Phi}\to 1$. 
 
The equivalence of $\|\cdot\|$ and $\|\cdot\|_{1,\Phi}$ follows now from 
$$
\|u\|=|\nabla u|_\Phi+\int_{\Gamma_1}|u|\; d\sigma\leq \|u\|_{1,\Phi}+2{\rm meas}(\Gamma_1)|u|_{L^{\Phi}(\Gamma_1)}\leq (1+2{\rm meas}(\Gamma_1)c_1)\|u\|_{1,\Phi},
$$
with $c_1>0$ the constant given by the compact embedding $W^{1,\Phi}(\Omega)\hookrightarrow L^{\Phi}(\Gamma_1)$.
 \qed
\end{proof}

Defining the Lagrange multiplier $\lambda\in X^\ast$ by 
$$
\langle \lambda,v\rangle:=\int_{\Gamma_3} -\frac{\partial u}{\partial n_\Phi} v\; d\sigma,
$$ 
and the set of {\it admissible Lagrange multipliers} 
$$
\Lambda:=\left\{ \mu\in X^\ast:\ \langle \mu,v\rangle\leq \int_{\Gamma_3} g|v|\; d\sigma, \ \forall v\in X \right\}.
$$
one can easily check that $\lambda\in \Lambda$. Moreover, for every $\mu\in\Lambda$  one has
\begin{equation}\label{VarForm3}
\langle \lambda,u\rangle-\langle \mu,u\rangle=\int_{\Gamma_3} -\frac{\partial u}{\partial n_\Phi}u d\sigma-\langle \mu,u\rangle =\int_{\Gamma_3} g\frac{u}{|u|}ud\sigma-\langle \mu,u\rangle =\int_{\Gamma_3} g|u|d\sigma-\langle \mu,u\rangle \geq 0.
\end{equation}
Keeping in mind \eqref{VarForm1}, \eqref{VarForm2} and \eqref{VarForm3} we get the following variational formulation of  $(P)$. 

\noindent $(\mathcal{P}_V)$: Find $u\in X$  and $\lambda\in\Lambda$ such that 
$$
\left\{
\begin{array}{l}
{\displaystyle \int_\Omega \frac{\phi(|\nabla u|)}{|\nabla u|}\nabla u\cdot\nabla(v-u)\;dx+\int_\Omega [h(x,v)-h(x,u)]dx+\langle\lambda,v-u\rangle\geq \int_{\Gamma_2} f_2 (v-u)\; d\sigma}, \forall v\in X,\\
\langle \lambda-\mu,u\rangle\geq 0, \forall \mu\in\Lambda.
\end{array}
\right.
$$
If $[u,\lambda]$ solves $(\mathcal{P}_V)$ we say that $u$ is a {\it weak solution} of $(P)$ with the {\it corresponding Lagrange multiplier} $\lambda$.

\begin{theorem}\label{MainRes2}
Suppose $(\mathcal{H}_0)$, $(\mathcal{H}_1)$  and $(\mathcal{H}_2)$ hold. If  \eqref{DivInt} holds, then assume in addition that $(\Phi_\ast)^{\frac{N-1}{N}}\succ\succ \Phi$. Then $(\mathcal{P}_V)$ possesses at least one solution.
\end{theorem}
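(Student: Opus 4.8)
The plan is to recast $(\mathcal{P}_V)$ as an instance of the abstract system $(S)$ and apply the existence machinery of Section 2. I would take $X$ to be the space of admissible displacements, which is a closed subspace of the reflexive space $W^{1,\Phi}(\Omega)$, renormed by $\|u\|=|\nabla u|_\Phi$ (equivalent to $\|\cdot\|_{1,\Phi}$ by Theorem \ref{EquivNorms}), and $Y:=X^\ast$, reflexive since $X$ is. I would set $K:=X$, let $\Lambda$ be the set of admissible multipliers, and choose $B(v,\mu):=\langle\mu,v\rangle$, $\psi\equiv 0$, $g:=0_{Y^\ast}$, $\langle f,v\rangle:=\int_{\Gamma_2}f_2v\,d\sigma$. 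The delicate choice is $\chi$. I would \emph{not} use the literal term $\int_\Omega\frac{\phi(|\nabla u|)}{|\nabla u|}\nabla u\cdot\nabla(v-u)\,dx$, because the map $u\mapsto\langle A(u),v-u\rangle$ attached to the nonlinear, non-compact $\Phi$-Laplacian need not be weakly upper semicontinuous, so ${\bf (H_\chi^1)}$ would not be available. Instead I would exploit convexity: with $\Xi(u):=\int_\Omega\Phi(|\nabla u|)\,dx+\int_\Omega h(x,u)\,dx$, I set $\chi(u,w):=\Xi(u+w)-\Xi(u)$.

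First I would record the equivalence that legitimizes this substitution. For fixed $u,\lambda$, the inequality $\Xi(v)-\Xi(u)+\langle\lambda,v-u\rangle\ge\langle f,v-u\rangle$ for all $v\in X$ is equivalent to the first line of $(\mathcal{P}_V)$. The direction towards $(\mathcal{P}_V)$ follows by inserting $v_t:=u+t(v-u)$, bounding $\int_\Omega[h(x,v_t)-h(x,u)]\,dx\le t\int_\Omega[h(x,v)-h(x,u)]\,dx$ by convexity of $h(x,\cdot)$, dividing by $t$ and letting $t\to 0_+$, so that the difference quotient of $\int_\Omega\Phi(|\nabla\cdot|)\,dx$ produces $\int_\Omega\frac{\phi(|\nabla u|)}{|\nabla u|}\nabla u\cdot\nabla(v-u)\,dx$. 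The converse direction uses the subgradient inequality $\int_\Omega\Phi(|\nabla v|)\,dx-\int_\Omega\Phi(|\nabla u|)\,dx\ge\int_\Omega\frac{\phi(|\nabla u|)}{|\nabla u|}\nabla u\cdot\nabla(v-u)\,dx$. Hence any solution of $(S)$ with this $\chi$ solves $(\mathcal{P}_V)$, the second lines being identical after the identification $\langle\lambda-\mu,u\rangle=B(u,\lambda)-B(u,\mu)$.

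Next I would verify the abstract hypotheses. Condition ${\bf (H_B^1)}$ holds because $B$ is the duality pairing: $u\mapsto\langle\lambda,u\rangle$ is convex and continuous, while $\lambda\mapsto\langle\lambda,u\rangle$ is concave and, by reflexivity, weakly continuous on $Y$. Condition ${\bf (H_\psi^1)}$ is trivial since $\psi\equiv 0$. For ${\bf (H_\chi^1)}$, the map $w\mapsto\chi(u,w)=\Xi(u+w)-\Xi(u)$ is convex with $\chi(u,0_X)=0$; and since $\Xi$ is convex and lower semicontinuous (its first summand by $(\mathcal{H}_0)$, its second by $(\mathcal{H}_1)$ and Fatou), it is weakly lower semicontinuous, so $u\mapsto\chi(u,v-u)=\Xi(v)-\Xi(u)$ is weakly upper semicontinuous. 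Thus the troublesome condition ${\bf (H_\chi^1)(i)}$ comes for free, which is the whole payoff of the convex reformulation.

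Finally I would treat the constraint sets, where the \emph{main obstacle} lies. The set $\Lambda$ is convex and weakly closed (an intersection of closed half-spaces) and, crucially, \emph{bounded}: testing $\mu\in\Lambda$ against $\pm v$ yields $|\langle\mu,v\rangle|\le\int_{\Gamma_3}g|v|\,d\sigma\le c\,\|g\|_{L^\infty(\Gamma_3)}\|u\|$ via the trace embedding $W^{1,\Phi}(\Omega)\hookrightarrow L^\Phi(\Gamma_3)$, so $\|\mu\|_Y$ is uniformly bounded. Since $0_X\in K=X$ and $0_Y\in\Lambda$ but $K$ is unbounded while $\Lambda$ is bounded, neither Lemma \ref{BoundedCase1} nor the global coercivity conditions ${\bf (C_1)},{\bf (C_2)}$ apply directly (the latter involve only $\chi,\psi,B$, which do not grow in the $\lambda$-variable). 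I would resolve this mixed situation by following the truncation scheme of Theorem \ref{Unbounded1}, but truncating only $K$: for $R>0$ I solve the system on $K_R:=\bar B_X(0,R)$ and the already bounded $\Lambda$ by Lemma \ref{BoundedCase1}, obtaining $[u_R,\lambda_R]$; testing the first inequality with $v=0_X$ gives $\Xi(0)-\Xi(u_R)\ge\langle\lambda_R-f,u_R\rangle$. Using $\int_\Omega\Phi(|\nabla u_R|)\,dx\ge\|u_R\|^{\phi^-}$ for $\|u_R\|\ge 1$ (a consequence of $(\mathcal{H}_0)$), an affine lower bound for the convex functional $\int_\Omega h(x,\cdot)\,dx$, and the uniform bound on $\|\lambda_R\|_Y$, I obtain $\|u_R\|^{\phi^-}\le C(1+\|u_R\|)$ with $\phi^->1$. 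This forces a uniform a priori bound $\|u_R\|\le R_0$, so for $R_0$ large the ball constraint is inactive and $[u_{R_0},\lambda_{R_0}]$ solves the untruncated system, hence $(\mathcal{P}_V)$.
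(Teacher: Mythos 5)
Your proposal is correct, but it reaches the conclusion by a genuinely different route than the paper. The paper keeps the principal part literal: it sets $\chi(u,v):=\langle I'(u),v\rangle$ with $I(u)=\int_\Omega\Phi(|\nabla u|)\,dx$, absorbs $\int_\Omega h(x,u)\,dx$ into $B$ together with the duality pairing, and verifies ${\bf (H_B^1)}$, ${\bf (H_\chi^2)}$, ${\bf (H_\psi^1)}$ — so the weak-continuity difficulty you correctly identify for the $\Phi$-Laplacian is handled by the monotone-plus-hemicontinuous branch ${\bf (H_\chi^2)}$ of the abstract theory (the Minty argument lives inside Lemma \ref{BoundedCase2} via Mosco's alternative). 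You instead convexify at the level of the application, taking $\chi(u,w):=\Xi(u+w)-\Xi(u)$ with the full energy $\Xi$, so that ${\bf (H_\chi^1)}(i)$ follows from weak lower semicontinuity of a convex functional and only Lemma \ref{BoundedCase1} is needed; the price is that you must separately prove the equivalence of the convexified inequality with the first line of $(\mathcal{P}_V)$, which you do correctly (subgradient inequality one way, the $v_t=u+t(v-u)$ difference-quotient argument the other way, using convexity of $h(x,\cdot)$). The treatments of the unbounded constraint also differ: the paper checks ${\bf (C_1)}$ using $\langle I'(u),u\rangle\geq\phi^-\,I(u)\geq\phi^-\|u\|^{\phi^-}$ together with the boundedness of $\Lambda$ and then invokes Theorem \ref{Unbounded1}, whereas you observe that Theorem \ref{Unbounded1} is stated for \emph{both} sets unbounded (and that ${\bf (C_1)}$ with $\psi\equiv 0$ fails on all of $X\times Y$) and therefore rerun the truncation on $K$ alone, extracting the a priori bound $\|u_R\|^{\phi^-}\leq C(1+\|u_R\|)$ by testing with $v=0_X$; this is a more scrupulous handling of a mismatch the paper passes over, and it yields the same conclusion. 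The one point you share with the paper and should not leave entirely implicit is the lower semicontinuity of $u\mapsto\int_\Omega h(x,u)\,dx$ (your appeal to Fatou needs an integrable minorant, e.g.\ the affine minorant of the convex integrand); the paper makes the same tacit assertion when claiming ${\bf (H_B^1)}$, so this is not a gap relative to its argument.
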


\begin{proof}
Let $K:=X$, $Y:=X^\ast$ and define $B:X\times Y\to\mathbb{R}$, $\chi:X\times X \to \mathbb{R}$, $\psi:Y\times Y\to\mathbb{R}$, $f\in X^\ast$ and $g\in X$ by 
$$
B(u,\lambda):=\langle \lambda,u\rangle+\int_\Omega h(x,u(x))dx, \ 
\chi(u,v):=\langle I'(u),v\rangle,$$
$$
 \psi(\lambda,\mu):=0,\ \langle f,v\rangle :=\int_{\Gamma_2} f_2 v\; d\sigma, \ g=0_X,
$$
where $I:X\to\mathbb{R}$ is the convex and lower semicontinuous functional defined by 
$$
I(u):=\int_\Omega \Phi(|\nabla u |)\;dx.
$$
The functional $I\in C^1(X;\mathbb{R})$  and one has (see, e.g., \cite[Lemma 3.4]{GH-L-M-S})
$$
\langle I'(u),v\rangle= \int_\Omega\frac{\phi(|\nabla u|)}{|\nabla u|}\nabla u\cdot\nabla v\; dx.
$$
It is straightforward that $(\mathcal{P}_V)$ can be written as system $(S)$  with   $B,\chi,\psi, K,\Lambda, f\in X^\ast$ and  $g\in X$ as above and conditions  ${\bf (H_B^1)}$, ${\bf (H_\chi^2)}$, ${\bf (H_\psi^1)}$ are fulfilled.  In order to complete the proof it suffices to show that the coercivity condition ${\bf (C_1)}$ holds. We start by pointing out that $\Lambda$ is a bounded subset of $X^\ast$ as 
$$
\langle  \mu,v\rangle \leq \int_{\Gamma_3} g|v|\; d\sigma\leq 2\|g\|_{L^\infty(\Gamma_3)}{\rm meas}(\Gamma_3)|v|_{L^{\Phi}(\Gamma_3)}\leq c_0\|v\|, \forall v\in X, \mu\in \Lambda,
$$
for some suitable constant $c_0>0$. This shows that $\|\mu\|_\ast\leq c$ for all $\mu\in\Lambda$. In particular, for $[u,\lambda]\in X\times \Lambda$ one has $\sqrt{\|u\|^2+\|\lambda\|_\ast^2}\to \infty$ if and only if $\|u\|\to \infty$. 
On the other hand, one has  (see, e.g., \cite[Lemma C.9]{C-P-S-T})
$$
I(u)\geq \|u\|^{\phi^-}, \forall u\in X, \|u\|>1,
$$
Thus, 
\begin{eqnarray*}
\frac{\chi(u,-u)+\psi(\lambda,-\lambda)}{\sqrt{\|u\|^2+\|\lambda\|_\ast^2}}&=&-\frac{\langle I'(u),u\rangle }{\sqrt{\|u\|^2+\|\lambda\|_\ast^2}} = -\frac{\displaystyle \int_\Omega \phi(|\nabla u|)|\nabla u|\; dx}{\sqrt{\|u\|^2+\|\lambda\|_\ast^2}} 
\leq -\frac{\phi^-\displaystyle \int_\Omega \Phi(|\nabla u|)\;dx }{\sqrt{\|u\|^2+\|\lambda\|_\ast^2}}\\
&\leq & -\frac{\phi^-\|u\|^{\phi^-}}{\sqrt{\|u\|^2+c_0^2}}\to -\infty \mbox{ as }\|u\|\to \infty.
\end{eqnarray*}
In conclusion $(\mathcal{P}_V)$ possesses at least one solution due to Theorem \ref{Unbounded1}.
\qed
\end{proof}

We close this subsection with some comments on the choice of problem $(P)$.  Very recently (see \cite[Section 6]{Cos-Pit21}) the following problem was investigated:
$$
(P_1): \ \left\{
\begin{array}{ll}
-\Delta_\Phi u+\frac{\phi(|u|)}{|u|}u =f_1+f_2, & \mbox { in } \Omega\\
-f_2\in \partial_C^2 h(x,u), & \mbox{ in }\Omega\\
u=0 ,& \mbox{ on } \Gamma_1\\ 
-\frac{\partial u}{\partial n_\Phi}\in k(x,u)\partial_C^2 j(x,u), & \mbox{ on }\Gamma_2
\end{array}
\right.
$$
where $h:\Omega\times \mathbb{R}\to\mathbb{R}$ and $j:\Gamma_2\times\mathbb{R}\to \mathbb{R}$ are locally Lipschitz w.r.t. the second variable and $\partial_C^2 h(x,t)$ stands for the {\it Clarke subdifferential} (see, e.g., Clarke \cite{Clarke}) of the mapping $t\mapsto g(x,t)$. The fact that we opted to partition $\Gamma$ into three parts for problem $(P)$, although it poses  no mathematical difficulties, comes from the fact that $(P)$ can serve as a model for the antiplane shear deformation of cylinders, of Hencky-type material, in contact with a rigid foundation as it can be seen in the following subsection. 
Due to the fact that convex functions are in fact locally Lipschitz on the interior of their effective domain and the convex and Clarke subdifferentials coincide in this case, problem $(P_1)$ is in fact a bit more general than $(P)$. The reasoning for  choosing the convex sudifferential here is the following: in \cite{Cos-Pit21} the authors show (through a different approach)  that $(P_1)$ possesses at least one weak solution under the key assumption that the Clarke subdifferential of $h$ satisfies a growth condition of the following type 
\begin{equation}\label{Integrability}
|\partial_C^2 h(x,t)|\leq a_1(x)+c_1\psi(|t|), \ \mbox{ for a.e. }x\in\Omega \mbox{ and all }t\in\mathbb{R},
\end{equation}
where $a_1\in L^{\Phi^\ast}(\Omega)$ and $\psi:\mathbb{R}\to\mathbb{R}$ satisfies $(\mathcal{H}_0)$, $\Phi_\ast\succ\succ \Psi$ and  $\psi^+<\phi^-$. This ensures, on the one hand, the integrability of $x\mapsto h^0(x,u(x);v(x))$ for all $u,v\in W^{1,\Phi}(\Omega)$ and, on the other hand, the inequality $\psi^+<\phi^-$ ensures the coercivity of a certain set-valued mapping. Choosing the convex subdifferential for problem $(P)$ we are able to prove the existence of at least one weak solution under the considerably less restrictive condition that $h(x,u(x))\in L^1(\Omega)$ for any $u\in W^{1,\Phi}(\Omega)$. Note that \eqref{Integrability} ensures this (if we replace $\partial_C^2$ by $\partial_2$), but we do not need to impose $\psi^+<\phi^-$, but only the fact that $\Phi_\ast\succ\succ \Psi$, if \eqref{DivInt} holds.  For example, if $\phi(t):=|t|^{p-2}t$ and $\psi(t):=|t|^{q-2}t$, $p,q\in (0,\infty)$, then $\phi^-=\phi^+=p$ and $\psi^-=\psi^+=q$, respectively. Therefore, $(P_1)$ possesses at least one weak solution if $q\in(1,p)$, whereas $(P)$ has has a solution if $q\in (1,p^\ast)$, with 
$$
p^\ast:=\left\{
\begin{array}{ll}
\frac{Np}{N-p}, & \mbox{ if }p<N,\\
\infty, & \mbox{ otherwise.}
\end{array}
\right.
$$ 

\subsection{An example arising in Contact Mechanics}

Throughout this subsection we consider a mathematical model which describes the frictional contact between a nonlinear elastic body and a rigid foundation. We investigate the {\it antiplane shear deformation} of the body, i.e., the deformation expected by loading a long cylinder in the direction of its generators so that the displacement field is independent of the axial coordinate. The antiplane model is in Contact Mechanics due to the fact that it maintains physical relevance while significantly simplifying the equations. For more details and connections we refer to \cite{And-Cos-Mat,Cos-Kri-Var,Cos-Mat10,HS02,Hor95,M-O-S,Mat-Sof09}.

Let  ${\bf B}$  be a deformable cylinder in the cartesian system $Ox_1x_2x_3$. We assume ${\bf B}$  is made of a nonlinear elastic Hencky-type material and its generators are parallel to the $Ox_3$-axis and are long enough such that the end effects in the axial direction are negligible. The cross section is a bounded domain $\Omega$ in the plane $Ox_1x_2$, with Lipschitz boundary $\Gamma$, partitioned into three measurable parts of positive measure $\Gamma_i$,  $1\leq i\leq3$.  Without loss of generality we may assume ${\bf B}=\Omega\times\mathbb{R}$. We assume ${\bf B}$ is subjected to volume forces of density $\overrightarrow{F_0}$ and surface tractions of density $\overrightarrow{F_2}$ act on $\Gamma_2\times \mathbb{R}$. Furthemore, suppose the body is clamped on $\Gamma_1\times\mathbb{R}$ and in frictional contact with a rigid foundation on $\Gamma_3\times\mathbb{R}$.

We define $\mathbb{S}_3$ to be the linear space of symmetric tensors of second order in $\mathbb{R}^3$. In order to avoid confusion we adopt the following notations: tensors in $\mathbb{S}_3$ will be bolded and vectors in $\mathbb{R}^d$ ($d=2,3$) will be written with an arrow above. For a vector $\overrightarrow{u}$ we denote by $u_\nu:=\overrightarrow{u}\cdot \overrightarrow {\nu}$ its the normal component and by $\overrightarrow{u_\tau}:=\overrightarrow{u}-u_\nu\overrightarrow \nu$ its tangential component. Similarly, for a stress field $\boldsymbol{\sigma}$ we define $\sigma_\nu$ and $\overrightarrow {\sigma_\tau}$ to be the normal and the tangential components of the Cauchy vector $\boldsymbol{\sigma}\overrightarrow{\nu}$, i.e., $\sigma_\nu:=(\boldsymbol{\sigma}\overrightarrow{\nu})\cdot\overrightarrow{\nu}$ and $\overrightarrow{\sigma_\tau}:=\boldsymbol{\sigma}\overrightarrow{\nu}-\sigma_\nu\overrightarrow{\nu}$, respectively. We also consider
$$
{\rm Div\; }\boldsymbol{\sigma}:=(\mu_1,\mu_2,\mu_3),\ \mu_i:=\frac{\partial \sigma_{i1}}{\partial x_1}+\frac{\partial \sigma_{i2}}{\partial x_2}+\frac{\partial \sigma_{i3}}{\partial x_3}, 
$$
$$
\boldsymbol{\varepsilon}(\overrightarrow{u}):=(\varepsilon_{ij}(\overrightarrow{u}))_{1\leq i,j\leq 3}, \ \varepsilon_{ij}(\overrightarrow{u}):=\frac{1}{2}\left( \frac{\partial u_i}{\partial x_j}+\frac{\partial u_j}{\partial x_i}\right),
$$
and 
$$
\boldsymbol{\sigma}^D:=\boldsymbol{\sigma}-\frac{1}{3} tr(\boldsymbol{\sigma})I_3, \ \ tr(\boldsymbol{\sigma}):=\sigma_{11}+\sigma_{22}+\sigma_{33} \mbox{ and } I_3=\left(
\begin{array}{lll}
1 \ \ & 0\ \ & 0\\
0 & 1 & 0\\
0 & 0 & 1
\end{array}
\right).
 $$

The mathematical model which describes the contact between the cylindrical body ${\bf B}$ and the foundation is presented below.

\noindent $(P_{3D}):$ Find a displacement field  $\overrightarrow{u}:{\Omega\times\mathbb{R}}\to\mathbb{R}^3$ such that 
$$
 \left\{
\begin{array}{ll}
-{\rm Div\; } \boldsymbol{\sigma}=\overrightarrow{F_0}, & \mbox{ in }\Omega\times\mathbb{R},\\
\boldsymbol{\sigma}=k_0tr(\boldsymbol{\varepsilon}^D(\overrightarrow{u}))I_3+a(|\boldsymbol{\varepsilon}^D(\overrightarrow{u})|^2)\boldsymbol{\varepsilon}^D(\overrightarrow{u}), & \mbox{ in }\Omega\times \mathbb{R},\\
\overrightarrow{u}=\overrightarrow{0},& \mbox{ on }\Gamma_1\times\mathbb{R},\\
\boldsymbol{\sigma}\overrightarrow{\nu}=\overrightarrow{F_2},& \mbox{ on }\Gamma_2\times\mathbb{R},\\
{\displaystyle |\overrightarrow{\sigma_\tau}|\leq g, \overrightarrow{\sigma_\tau}= -g\frac{\overrightarrow{u_\tau}}{|\overrightarrow{u_\tau}|}} \mbox{ if } \overrightarrow{u_\tau}\neq \overrightarrow{0},&  \mbox{ on }\Gamma_3\times\mathbb{R},
\end{array}
\right.
$$
where $a:(0,\infty)\to(0,\infty)$ is a prescribed function. The first line of $(P_{3D})$ represents the {\it equilibrium equation}, while the second line is the {\it constitutive law}  which describes the behaviour of the material. The last relation is the well known Tresca friction law, with $g:\Gamma_3\to[0,\infty)$ is the friction bound, that is, the threshold from which the slipping begins.  

Loading the body in the following particular way:
$$
\overrightarrow{F_0}(x_1,x_2,x_3):=(0,0,f_0(x_1,x_2)), \mbox{ with }f_0:\Omega\to\mathbb{R},
$$
$$
\overrightarrow{F_2}(x_1,x_2,x_3):=(0,0,f_2(x_1,x_2)), \mbox{ with }f_2:\Gamma_2\to\mathbb{R},
$$
we expect a displacement field $\overrightarrow{u}$ independent of $x_3$ of the form 
$$
\overrightarrow{u}(x_1,x_2,x_3)=(0,0,u(x_1,x_2)), \mbox{ with }u:\bar{\Omega}\to\mathbb{R}.
$$
Combining this with the fact the unit outer normal to $\Gamma\times \mathbb{R}$ is parallel to the plane $Ox_1x_2$, i.e.,   $\overrightarrow{\nu}(x_1,x_2,x_3):=(\nu_1(x_1,x_2),\nu_2(x_1,x_2),0)$, we deduce that the infinitesimal strain tensor $\boldsymbol{\varepsilon}(\overrightarrow u)$ and the stress field $\boldsymbol{\sigma}$ have the following equalities
$$
\boldsymbol{\varepsilon}(\overrightarrow u)=\frac{1}{2}\left(
\begin{array}{ccc}
0 \ & 0 \ &\frac{\partial u}{\partial x_1}\\
0 \ & 0 \ & \frac{\partial u}{\partial x_2}\\
\frac{\partial u}{\partial x_1} &\frac{\partial u}{\partial x_2}  & 0\\
\end{array}
\right), 
\ \boldsymbol{\varepsilon}^D(\overrightarrow{u})=\boldsymbol{\varepsilon}(\overrightarrow{u}), \ \boldsymbol{\sigma}=a(|\nabla u|^2)\boldsymbol{\varepsilon}(\overrightarrow{u}), 
$$
$$
u_\nu=0, \ \overrightarrow{u_\tau}=\overrightarrow{u},\  \boldsymbol{\sigma}\overrightarrow{\nu}=(0,0,a(|\nabla u|^2)\nabla u\cdot \overrightarrow{n}), \sigma_\nu=0, \overrightarrow{\sigma_\tau}=\boldsymbol{\sigma}\overrightarrow{\nu},
$$
where $\overrightarrow{n}=(\nu_1,\nu_2)$ is the outer normal unit to $\Gamma$ in the $Ox_1x_2$ plane.

Now choosing $\phi(t):=a(t^2)t$ and $h(x,t):=-f_0(x)t$ we observe that $h(x,u(x))\in L^{\Omega}$ for any $u\in W^{1,\Phi}(\Omega)$ whenever $f_0\in L^{\Phi^\ast}(\Omega)$. Moreover, 
$$
\partial_2 h(x,t)=\{-f_0(x)\}, \,\forall t\in\mathbb{R} \mbox{
and }
-{\rm Div\; }\boldsymbol{\sigma}=(0,0,-\Delta_\Phi u),
$$
therefore problem $(P_{3D})$ reduces to problem $(P)$ from previous section, provided that the function $a:(0,\infty)\to (0,\infty)$ is such that $t\mapsto a(t^2)t$ satisfies $({\mathcal{H}_1})$. A simple and meaningful example of such function is $a(t):=\gamma \frac{(\sqrt{1+t}-1)^{\gamma-1}}{\sqrt{1+t}}$, with $\gamma> 1$ (see, e.g., Fukagai \& Narukawa \cite{F-N95}).

\pagestyle{plain}
\bibliographystyle{siam}

\bibliography{Biblio}

\end{document}